\newtheorem{theorem}{Theorem}[section]
\newtheorem{lemma}[theorem]{Lemma}
\newtheorem{remark}[theorem]{Remark}
\newtheorem*{remarks}{Remarks}
\newtheorem{claim}[theorem]{Claim}
\def\ue{u_{\epsilon}}
\def\ii{\'\i}
\def\vf{\varphi}
\def\ep{\epsilon}
\def\t{\theta}
\begin{document}

\title[]{\textbf{The Brezis--Nirenberg Problem for the Laplacian with a Singular Drift in $\mathbb{R}^{\lowercase{n}}$ and 
$\mathbb{S}^{\lowercase{n}}$}}

\author[Benguria]{Rafael D. Benguria$^1$}

\author[Benguria]{Soledad Benguria$^2$}

\address{$^1$ Instituto  de F\'\i sica, Pontificia Universidad Cat\' olica de Chile,}
\email{rbenguri@fis.puc.cl}

\address{$^2$ Department of Mathematics, University of Wisconsin - Madison}
\email{benguria@math.wisc.edu}


\smallskip
\begin{abstract}
We consider the Brezis--Nirenberg problem for the Laplacian with a  singular drift for a (geodesic) ball in both  $\mathbb{R}^{n}$ and $\mathbb{S}^n$, $3 \le n \le 5$.
The singular drift we consider 
derives from a potential which is symmetric around the center of the (geodesic) ball. Here the potential is given by a parameter ($\delta$ say) times the logarithm of the 
distance to the center of the ball. In both cases we determine the exact region in the parameter space  for which positive smooth solutions of this 
problem exist and the exact region for which there are no solutions. The parameter space is characterized by the (geodesic) radius of the ball, $\delta$, and $\lambda$, the 
coupling constant of the linear term of the Brezis-Nirenberg problem.
\end{abstract}

\bigskip

\maketitle

\bigskip
\noindent
{\it MCS:} 35XX; 35B33; 35A24; 35J25; 35J60

\bigskip
\noindent
{\it Keywords:}  Brezis--Nirenberg Problem, Spaces of Constant Curvature, Drift Laplacian, Critical Exponent, Rellich--Pohozaev Identity

\section{Introduction}

In 1983, Brezis and Nirenberg \cite{BrNi83} considered the nonlinear eigenvalue problem, 
\begin{equation}
-\Delta u = \lambda u + |u|^{4/(n-2)}u,
\label{standardBN}
\end{equation}
with $u\in H_0^1(\Omega), $ where $\Omega$ is a bounded smooth domain in $\mathbb{R}^n,$ with $n\ge 3$. 
Among other results, they proved that if $n\ge 4,$ there is a positive solution of this problem for all $\lambda\in(0,\lambda_1)$ where $\lambda_1$ is the first Dirichlet eigenvalue of $\Omega$. They also proved that if $n=3,$ there is a $\mu(\Omega)>0,$ such that for any $\lambda\in(\mu,\lambda_1),$ the nonlinear eigenvalue problem has a positive solution, 
whereas if $\lambda \ge \lambda_1$ or $\lambda\le \mu$ there are no positive smooth solutions.  
Moreover, if $\Omega$ is a ball they proved that $\mu = \lambda_1/4$. 

\bigskip

One of the remarkable features of the problem considered by Brezis and Nirenberg is the fact that the boundaries, in the parameter space, that divide the existence and 
nonexistence regions of positive smooth solutions can be sharply determined.

\bigskip
After the publication of  \cite{BrNi83} many authors have considered variants of the problem (\ref{standardBN}). On the one hand, it has been extended to cover domains in
other spaces of constant curvature (see, e.g., \cite{BaBe02} and  \cite{St02} for the analogous problem on domains in $\mathbb{S}^n$ and $\mathbb{H}^n$ respectively). 
On the other hand, different forms of the linear term on the right side of  (\ref{standardBN}) have been explored. Moreover, the Laplacian has been replaced by other linear operators, 
e.g., by the the Laplacian with Hardy perturbation (see, e.g., \cite{CaHa04,CaPe03,ChZo12}). Many other related problems have been considered, in particular the range of values of $\lambda$, for different values of $n$, for which smooth 
sign changing solutions of the radial problem do exist, etc. 

\bigskip
In the Brezis--Nirenberg problem and in all its variants the existence proof relies on a concentration compactness argument while the proof of nonexistence is based 
on a Rellich--Pohozaev argument. Even though these two techniques are in principle unrelated, it is remarkable that they provide a sharp transition in the space of parameters 
of the regions of existence and nonexistence when the domain is a ball in $\mathbb{R}^n$ (or a geodesic ball in $\mathbb{S}^n$ and in $\mathbb{H}^n$, etc.)

\bigskip
What we address in this manuscript is a further variant of the Brezis-Nirenberg problem, namely we study (\ref{standardBN}) with the Laplacian replaced by
a particular form of a weighted Laplacian, or if one prefers, the {\it drift Laplacian}. The interest on weighted Laplacians originated in the early 1980's for different reasons
coming from physics, geometry, and probability. Depending on the context, a weighted Laplacian is often called the {\it Witten Laplacian}  (after \cite{Wi83}) or the 
{\it Bakry-\'Emery Laplacian} (after \cite{BaEm85}). During the past decade there has been a growing interest in studying the spectral properties of weighted Laplacians or  {\it drift Laplacians} (see, e.g., \cite{BeHaNa05, BoJa07, ChLuRo15, HaNaRu11}). As described in \cite{ChLuRo15}, a Bakry--\'Emery manifold, denoted by the triple $(M,g,\phi)$ is a complete Riemannian manifold $(M,g)$ together with some function $\phi \in C^2(M)$  where the measure on $M$ is the weighted measure $\exp(-\phi) \, dV_g$. 
The Bakry--\'Emery Laplacian $\Delta_{\phi}$ associated with such a manifold is given by $\Delta_{\phi}=\Delta-\nabla \phi \cdot \nabla$ which is self-adjoint with respect to the inner product associated with the weighted measure. Here $\Delta$ is the standard Laplace-Beltrami operator and $\nabla$ is the gradient operator on the  Bakry--\'Emery manifold. 
Weighted Laplacians were also introduced, in a different context, by Chavel and Feldman \cite{ChFe91} in the early nineties.

\bigskip
Typically, in the Bakry-\'Emery Laplacian, the potential $\phi$ is smooth, and so is the drift term. However, singular drifts have also been considered in the literature. In fluid mechanics
a weighted Laplacian with a singular drift is rather common, but typically the drift is divergence free, in other words, away from the singularities the potential $\phi$ is harmonic. 
More recently heat kernels with singular drifts have been considered (see, e.g.,  \cite{Gr05,GrOuRo15}). In \cite{GrOuRo15})  a singular drift is considered with a 
(singular) potential of the form $\phi(x)=|x|^{-\alpha}$ with $\alpha>0$. In that case the singular drift has, generically, a nonzero divergence. In our case, when  we consider the 
Brezis-Nirenberg problem for the weighted Laplacian in $\mathbb{R}^n$ ($n \ge 3$), the singular drift derives from a potential of the form $\phi(x) = \delta \log(|x|)$. 
Notice that in this case the weighted measure, $\exp(-\phi) \, dV_g$ becomes $|x|^{-\delta} \, dx$ where $dx$ is the standard Lebesgue measure in $\mathbb{R}^n$. Thus 
the weighted measure can be thought of  as the Lebesgue measure on a space of an effective {\it fractional} dimension $d \equiv  n-\delta$, a fact that we will use intensively 
in the proofs of our theorems. 

\bigskip

\bigskip
In this manuscript we  first consider the problem, 
\begin{equation}
-\Delta u  + \delta \, \frac{\vec x}{|x|^2} \cdot \nabla u = \lambda u + |u|^{4/(n-2-\delta)}u,
\label{BNwithdrift1}
\end{equation}
with $u \in H_0^1(\Omega)$, 
and $\Omega$ is the ball in  $\mathbb{R}^n$, $n\ge 3$, centered at the origin. Equation (\ref{BNwithdrift1}) involves a weighted Laplacian with a singular drift, deriving from the 
potential $\phi(x) = \delta \log(|x|)$, $\delta \in \mathbb{R}$. Because of  Hardy's 
inequality \cite{Da99, Ha919,HLP934}, the operator with the singular drift one considers on the left side of (\ref{BNwithdrift1}) is a positive operator provided $\delta < (n-2)/2$. Notice that the critical Sobolev exponent on the right side of (\ref{BNwithdrift1}) depends on the parameter $\delta$ that characterizes the singular drift. In terms of the ``effective 
dimension'' introduced above, in connection with the definition of the weighted Laplacian we consider here, the critical Sobolev exponent is given by the standard 
form, $(d+2)/(d-2)$, a remark which is important later in the proofs of our theorems. We are interested in the range of values of $\lambda$  and $\delta$ for which 
(\ref{BNwithdrift1}) admits positive radial  smooth solutions.

\bigskip
\noindent
Concerning this problem our main result is the following Theorem.

\begin{theorem}\label{TheTheoremEuclidean}
Let  $\Omega \subset \mathbb{R}^n$, with $3 \le n \le 5$,  be the unit ball centered at the origin. Then, 

\bigskip

\noindent
i) If $n=3$ and  $\delta \in (-1,1/2)$, (\ref{BNwithdrift1}) has a unique positive radial  solution $u \in H_0^1(\Omega)$ provided 
$$
j_{-(1-\delta)/2,1}^2 < \lambda<j_{(1-\delta)/2,1}^2,
$$
and no positive radial solutions for $\lambda$ outside that range. 

\bigskip

\noindent
ii) If $n=4$ and  $\delta \in (0,1)$, (\ref{BNwithdrift1}) has a unique positive radial  solution $u  \in H_0^1(\Omega)$ provided 
$$
j_{-(2-\delta)/2,1}^2 < \lambda<j_{(2-\delta)/2,1}^2,
$$
and no positive radial solutions for $\lambda$ outside that range. 
  
\bigskip

\noindent
iii) If $n=5$ and  $\delta \in (1,3/2)$, (\ref{BNwithdrift1}) has a unique positive radial  solution $u  \in H_0^1(\Omega)$ provided 
$$
j_{-(3-\delta)/2,1}^2 < \lambda<j_{(3-\delta)/2,1}^2,
$$
and no positive radial solutions for $\lambda$ outside that range.

\end{theorem}

\begin{remarks} 
\noindent
a) Here $j_{k,\ell}$ denotes the $\ell$-th positive zero of the Bessel function $J_{k}(t)$.

\noindent
b) Notice that $j_{(n-2-\delta)/2,1}^2$, for $n\ge 3$ and $\delta<(n-2)/2$ is the first Dirichlet eigenvalue of the unit ball, centered at the origin, of the drift Laplacian 
on the left side of (\ref{BNwithdrift1}). 

\noindent
c) Because of the simple behaviour under scaling of the drift Laplacian we consider here, it is trivial to extend the situation to a ball of any radius. In particular the 
first Dirichlet eigenvalue for the ball of radius $R$ becomes $j_{(n-2-\delta)/2,1}^2/R^2$, etc. 
\end{remarks}

\bigskip

\bigskip

Next we consider the Brezis-Nirenberg problem with a singular drift on geodesic balls of $\mathbb{S}^n$ ($n\ge 3$). 
When the underlying manifold is the $n$--dimensional sphere $\mathbb{S}^n$, the standard Brezis--Nirenberg problem is given through  the nonlinear eigenvalue problem 
\begin{equation}
-\Delta_{\mathbb{S}^n}u = \lambda u + |u|^{4/(n-2)}u,
\label{BBSn}
\end{equation}
where $u \in H_0^1(D)$, and $D$ is a geodesic ball in $\mathbb{S}^n$. 
Here $ -\Delta_{\mathbb{S}^n}$ denotes  the Laplace--Beltrami operator in $\mathbb{S}^n$ and
$(n+2)/(n-2)$ is the critical Sobolev exponent. 
In dimension 3, Bandle and Benguria \cite{BaBe02} proved that,  for $\lambda > -3/4$, this problem has a unique positive solution if and only if 
$$ 
\frac{\pi^2 - 4\theta_1^2}{4\theta_1^2}<\lambda< \frac{\pi^2 - \theta_1^2}{\theta_1^2},
$$ 
where $\theta_1$ is the geodesic radius of the ball. Moreover, they proved that if $\lambda \le -3/4$ and $\theta_1 \le \pi/2$ (i.e., for geodesic caps contained in the hemisphere) 
this problem does not have positive radial solutions. It is worth mentioning that for the remaining case, i.e., for $\lambda < -3/4$ and $\pi/2 < \theta_1  \le \pi$,  independently 
Brezis and Peletier \cite{BrPe04,BrPe06} and Bandle and Wei \cite{BaWe07,BaWe08} characterized all the (multiple) positive radial solutions of this problem.

\bigskip

To describe the Brezis--Nirenberg problem for the singular drift Laplacian on $\mathbb{S}^n$ we need some notation. As in \cite{BaBe02} we are only
considering geodesic balls $D$ on $\mathbb{S}^n$ centered at the North Pole ($N\!P$). We denote by $\theta$ the azimuthal angle of a point $P \in D$ (i.e., the angle 
between the vectors that go from the center of $\mathbb{S}^n$ to $P$ and $N\!P$). Here $0 \le \theta \le \theta_1$, where $\theta_1$ 
is the geodesic radius of $D$. It is clear that $0<\theta_1 \le \pi$.

\bigskip
Having this notation in place,  the  Brezis--Nirenberg problem for the singular drift Laplacian on $\mathbb{S}^n$ 
is given by, 
\begin{equation}
-\Delta_{\mathbb{S}^n}u + \delta \,  \frac{\cos \theta}{\sin \theta}  \, \hat \theta \cdot \nabla u = \lambda u + |u|^{4/(n-2-\delta)}u,
\label{BBDriftSn}
\end{equation}
where $u \in H_0^1(D)$, and $D$ is a geodesic ball in $\mathbb{S}^n$, $3 \le n \le 5$.  Equation ({\ref{BBDriftSn}) involves a weighted Laplace--Beltrami operator with a singular drift 
deriving from the potential $\phi(\theta) = \delta \log \sin \theta$, $\delta \in \mathbb{R}$. As in the Euclidean case, because of the appropriate Hardy Inequality (see, e.g., 
\cite{ChRi09}), the operator on the left side  of  ({\ref{BBDriftSn}) is positive--definite provided $\delta<(n-2)/2$. In analogy with the Euclidean case, here the weighted measure, $\exp(-\phi) \, dV_g$ becomes 
${\sin \theta}^{-\delta} \, d\mu$ where $d\mu$ is the invariant measure in $\mathbb{S}^n$. Hence, again in this case, 
the weighted measure can be thought of  as the  measure on a space of an effective {\it fractional} dimension $d \equiv  n-\delta$.

\bigskip

In the sequel, we only consider positive radial solutions (i.e., positive solutions that depend only on the azimuthal angle $\theta$) of (\ref{BBDriftSn}) defined on geodesic caps 
centered at the North--Pole, satisfying Dirichlet boundary conditions, i.e., $u(\theta_1)=0$. In terms of the parameter $d=n-\delta$, the positive radial solutions of (\ref{BBDriftSn}), 
satisfy the ODE, 
\begin{equation}
-u''(\theta) - (d-1) \cot \theta \, u'(\theta) = \lambda u(\theta) + |u(\theta)|^{4/(d-2)}u(\theta),
\label{BBSnRadial}
\end{equation}
where $u$ is such that $u(\theta_1)=0$. Here $' \equiv d/d\theta$, etc.
In what follows we will consider $d$ as just being 
a parameter in equation (\ref{BBSnRadial}), taking values in the interval $(2,4)$.

\bigskip

Our main result concerning (\ref{BBSnRadial}) is the following:

\begin{theorem}\label{TheTheorem} 

\bigskip
\noindent
i) For any $2<d<4$,  if $\lambda \ge -d(d-2)/4$ and $0\le \theta_1 \le \pi$, the boundary value problem (\ref{BBSnRadial}), in the interval $(0,\theta_1)$, with $u'(0)= u(\theta_1)=0$
has a  positive solution if and only if $\lambda$ is such that
\begin{equation}
\mu \equiv \frac{1}{4} [(2 \ell_2 +1)^2 - (d-1)^2] < \lambda < \frac{1}{4} [(2 \ell_1 +1)^2 - (d-1)^2] \equiv \lambda_1,
\label{rangeoflambda}
\end{equation}
where $\ell_1$ (respectively $\ell_2$) is the first positive value of $\ell$ for which the associated Legendre function ${\rm P}_{\ell}^{(2-d)/2} (\cos\theta_1)$ (respectively
 ${\rm P}_{\ell}^{(d-2)/2} (\cos\theta_1)$) vanishes.

\bigskip
\noindent
ii) Moreover, for any $d>2$, if $\lambda \le  -d(d-2)/4$ and $0\le \theta_1 \le \pi/2$, the boundary value problem (\ref{BBSnRadial}), in the interval $(0,\theta_1)$, with $u'(0)= u(\theta_1)=0$
does not have a positive solution. 
\end{theorem}
\begin{remark} 
a) In the remaining sector, i.e., for $\lambda < -d(d-2)/4$ and $\pi/2 < \theta_1  \le \pi$,  for any $2<d<4$ one expects to have multiple solutions to this problem in a similar vein as in the case $d=3$ studied in \cite{BaWe07,BaWe08,BrPe04,BrPe06}. 
b) In i)  the positive solution, if it  exists, is unique. 
\end{remark}

\bigskip
\noindent
We illustrate the results of Theorem \ref{TheTheorem} in the following two figures. In figure 1, we show, for $\theta_1=\pi/3$  the region (shaded in the figure) of existence 
of positive solutions for the parameter $\lambda$ as a function of $d$ (in this figure we only illustrate what happens for $\lambda > -d(d-2)/4$). Notice how the width of the shaded region increases with $d$ and that there is no gap (between $\lambda$ and $-d(d-2)/4$) when $d=4$. 

\begin{figure}[h!]
    \centering
    \includegraphics[width=0.4 \textwidth]{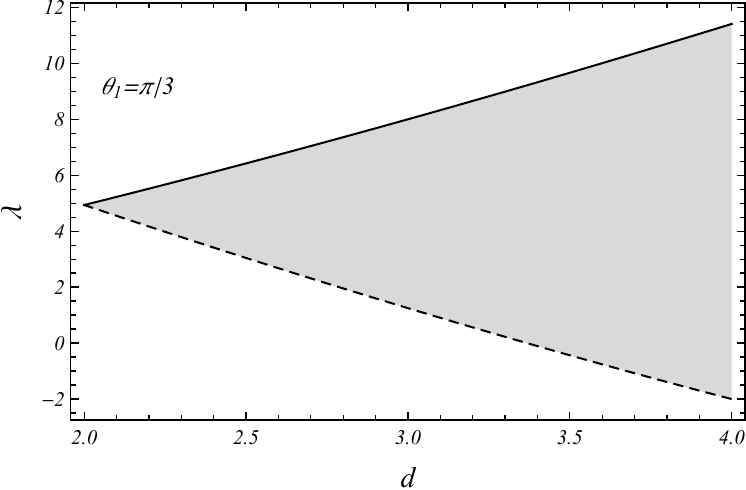}
    \label{monitopitercio}
\caption{The region in the $\lambda$--$d$ space, for $\theta_1=\pi/3$, where radial positive solutions of (\ref{BBSnRadial}) exist.}
\end{figure}

Finally, the region in Figure 2 between the curves $\mu$ and $\lambda_1$, is the region in the $\lambda$--$\theta_1$ space, for fixed $d=3.5$, where radial positive solutions of (\ref{BBSnRadial}) exist.

\begin{figure}[h!]
    \centering
    \includegraphics[width=0.5\textwidth]{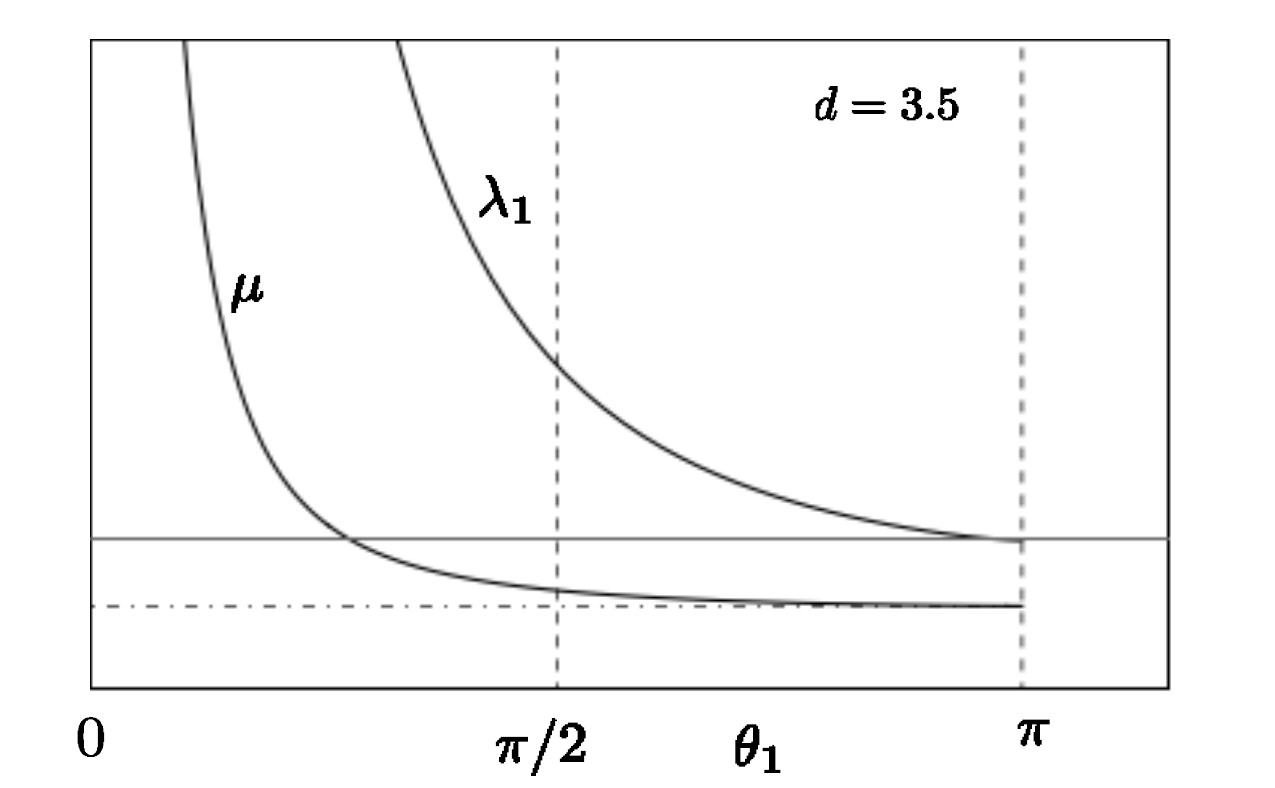}
    \label{monitoconNfijo}
\caption{The region in the $\lambda$--$\theta_1$ space, for fixed $d=3.5$, where radial positive solutions of (\ref{BBSnRadial}) exist.}
\end{figure}

\bigskip
For the drift Laplacian on domains of $\mathbb{S}^n$, $n \ge 3$, the results contained in Theorem 1.2 can be cast in the following form
(for the proof, see Appendix A). 

\begin{theorem}\label{TheTheoremDriftSn}
Let  $D \subset \mathbb{S}^n$, with $3 \le n \le 5$,  be a geodesic ball of geodesic radius $0<\theta_1 \le \pi$ centered at the 
North--Pole, and let  $\lambda \ge -(n-\delta)(n-2-\delta)/4$. Then we have:

\bigskip

\noindent
i) If $n=3$ and  $\delta \in (-1,1/2)$, (\ref{BBDriftSn}) has a unique positive radial  solution $u \in H_0^1(D)$ provided 
$$
\frac{1}{4} [(2 \ell_2 +1)^2 - (2-\delta)^2] < \lambda < \frac{1}{4} [(2 \ell_1 +1)^2 - (2-\delta)^2], 
$$
where $\ell_1$ (respectively $\ell_2$) is the first positive value of $\ell$ for which the associated Legendre function 
${\rm P}_{\ell}^{(\delta-1)/2} (\cos\theta_1)$ (respectively
 ${\rm P}_{\ell}^{(1-\delta)/2} (\cos\theta_1)$) vanishes.
Moreover, (\ref{BBDriftSn}) has no positive radial solutions for $\lambda$ outside that range.

\bigskip

\noindent
ii) If $n=4$ and  $\delta \in (0,1)$, (\ref{BBDriftSn}) has a unique positive radial  solution $u \in H_0^1(D)$ provided 
$$
\frac{1}{4} [(2 \ell_2 +1)^2 - (3-\delta)^2] < \lambda < \frac{1}{4} [(2 \ell_1 +1)^2 - (3-\delta)^2], 
$$
where $\ell_1$ (respectively $\ell_2$) is the first positive value of $\ell$ for which the associated Legendre function 
${\rm P}_{\ell}^{(\delta-2)/2} (\cos\theta_1)$ (respectively
 ${\rm P}_{\ell}^{(2-\delta)/2} (\cos\theta_1)$) vanishes.
Moreover, (\ref{BBDriftSn}) has no positive radial solutions for $\lambda$ outside that range.

\bigskip

\noindent
iii) If $n=5$ and  $\delta \in (1,3/2)$, (\ref{BBDriftSn}) has a unique positive radial  solution $u \in H_0^1(D)$ provided 
$$
\frac{1}{4} [(2 \ell_2 +1)^2 - (4-\delta)^2] < \lambda < \frac{1}{4} [(2 \ell_1 +1)^2 - (4-\delta)^2], 
$$
where $\ell_1$ (respectively $\ell_2$) is the first positive value of $\ell$ for which the associated Legendre function 
${\rm P}_{\ell}^{(\delta-3)/2} (\cos\theta_1)$ (respectively
 ${\rm P}_{\ell}^{(3-\delta)/2} (\cos\theta_1)$) vanishes.
Moreover, (\ref{BBDriftSn}) has no positive radial solutions for $\lambda$ outside that range.

\end{theorem}
\begin{remark} 

\noindent
i) For any $n\ge 3$, and $\delta < (n-2)/2$, if $\lambda < -(n-\delta) (n-2-\delta)/4$ and $0 < \theta_1 \le \pi/2$, (\ref{BBDriftSn}) has no positive radial solutions. 

\noindent
ii) In the remaining sector, i.e., for $\lambda < -(n-\delta) (n-2-\delta)/4$ and $\pi/2 < \theta_1  \le \pi$,  for $n=3, 4, 5$, 
one expects to have multiple solutions to this problem in a similar vein as in the case $n=3$, $\delta=0$ studied in \cite{BaWe07,BaWe08,BrPe04,BrPe06}. 
\end{remark}

\bigskip
\bigskip
The rest of the manuscript is organized as follows. Sections 2, 3 and 4 are dedicated to prove Theorem 1.2. 
In section \ref{sec:prelimianries} we begin by showing that $\ell_2<\ell_1$. That is, the range of existence of radial positive solutions of 
(\ref{BBSnRadial}) given by (\ref{rangeoflambda}) is non empty. We then show that the upper bound corresponds to the first Dirichlet eigenvalue of the geodesic ball. That is, we show that if $\lambda_1$ is the first positive eigenvalue of the boundary value problem 
$$
-u''(\theta) - (d-1) \cot \theta \, u'(\theta)  = \lambda u(\theta)
$$ 
with $u(\theta_1)=0$,  then $\lambda_1 =  \frac{1}{4} [(2 \ell_1 +1)^2 - (d-1)^2]$. 

\bigskip
In section \ref{sec:existencia} we show that there are positive radial solutions if $\frac{1}{4} [(2 \ell_2 +1)^2 - (d-1)^2] < \lambda < \frac{1}{4} [(2 \ell_1 +1)^2 - (d-1)^2]$, and in section \ref{sec:noexistencia}  we show that there are no positive radial solutions if $\lambda \le \frac{1}{4} [(2 \ell_2 +1)^2 - (d-1)^2].$ 

\bigskip 
The proof of Theorem 1.1 follows easily from the result of Jannelli \cite{Ja99} while the proof of Theorem 1.4 follows from our Theorem 1.2. We give the proofs of Theorem 1.1 and 1.4 
in the Appendix A. 

\bigskip
The analog of Theorem 1.2 in the hyperbolic space $\mathbb{H}^d$, $2<d<4$ has been recently proved by one of us in \cite{Be16}. The results in \cite{Be16} can 
also be  expressed in terms of an appropriate singular drift Laplace--Beltrami operator in $\mathbb{H}^n$, for $n=3,4,5$ and for special values of the coupling constant of the drift. 

 \bigskip
 \bigskip

\section{Preliminaries}\label{sec:prelimianries}

We begin by studying the order of the first positive zeroes of $P_\ell^{\nu}(s)$ and $P_\ell^{-\nu}(s)$ respectively, where $\nu \in (0,1).$

\begin{lemma}\label{lemma2.1}
Let $\alpha= (2-d)/2, $ with $2<d<4.$ Let $\t_1\in\left(0,\frac{\pi}{2}\right)$ be fixed and choose $\ell_1$ (respectively $\ell_2$) to be the first positive value of $\ell$ for which the associated Legendre function ${\rm P}_{\ell}^{(2-d)/2} (\cos\theta_1)$ (respectively
 ${\rm P}_{\ell}^{(d-2)/2} (\cos\theta_1)$) vanishes. Then $\ell_2 < \ell_1.$ 
\end{lemma}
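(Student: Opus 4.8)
Throughout write $\nu=(n-2)/2\in(0,1)$, so that the two Ferrers functions occurring in the lemma are $P_\ell^{-\nu}$ (for $\ell_1$) and $P_\ell^{\nu}$ (for $\ell_2$), and let $'=d/d\theta$. The plan is to pass to the variable $\theta\in(0,\pi)$ and exploit the fact that, as functions of $\theta$, \emph{both} $P_\ell^{\nu}(\cos\theta)$ and $P_\ell^{-\nu}(\cos\theta)$ solve the \emph{same} Sturm--Liouville equation
$$
\bigl(\sin\theta\,y'\bigr)'+\Bigl(\ell(\ell+1)\sin\theta-\frac{\nu^2}{\sin\theta}\Bigr)\,y=0 ,
$$
whose Liouville normal form (set $y=(\sin\theta)^{-1/2}z$) is $z''+\bigl[(\ell+\tfrac12)^2-(\nu^2-\tfrac14)\csc^2\theta\bigr]z=0$. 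Near the singular endpoint $\theta=0$ the Frobenius exponents are $\pm\nu$, and from the classical behaviour $P_\ell^{\pm\nu}(\cos\theta)=c_{\pm}\,\theta^{\mp\nu}\,(1+o(1))$ with $c_{\pm}=2^{\pm\nu}/\Gamma(1\mp\nu)>0$ one reads off that $P_\ell^{-\nu}(\cos\theta)$ is the recessive solution (it vanishes at $\theta=0$) while $P_\ell^{\nu}(\cos\theta)>0$ for $\theta$ near $0$. Finally, at $\ell=0$ there are the closed forms $P_0^{\pm\nu}(\cos\theta)=\cot^{\pm\nu}(\theta/2)\big/\Gamma(1\mp\nu)$, strictly positive on $(0,\pi)$; in particular $P_0^{\nu}(\cos\theta_1)>0$.

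Step 1 (the first $\theta$-zero of $P_\ell^{-\nu}(\cos\theta)$). In the normal form the potential $(\ell+\tfrac12)^2-(\nu^2-\tfrac14)\csc^2\theta$ is strictly increasing in $\ell$, so by Sturm comparison $P_\ell^{-\nu}(\cos\theta)$ has no zero in $(0,\pi)$ for $\ell$ below some threshold, and above it its first zero $\psi(\ell)\in(0,\pi)$ is continuous and strictly decreasing, tending to $0$ as $\ell\to\infty$. Since $P_0^{-\nu}(\cos\theta)>0$ on $(0,\pi)$ and, by definition, $\ell_1$ is the smallest positive $\ell$ with $P_\ell^{-\nu}(\cos\theta_1)=0$, it follows that $\psi(\ell_1)=\theta_1$; that is, $P_{\ell_1}^{-\nu}(\cos\theta)>0$ on $(0,\theta_1)$ and $P_{\ell_1}^{-\nu}(\cos\theta_1)=0$. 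This zero is simple (otherwise the solution vanishes identically), and since the function is positive just to its left,
$$
\frac{d}{d\theta}\,P_{\ell_1}^{-\nu}(\cos\theta)\,\big|_{\theta=\theta_1}<0 .
$$

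Step 2 (a Wronskian identity, and the conclusion). Because the equation is self-adjoint with weight $\sin\theta$, the quantity $\sin\theta\cdot W(\theta)$ is constant, where $W(\theta):=P_\ell^{\nu}(\cos\theta)\,\dfrac{d}{d\theta}P_\ell^{-\nu}(\cos\theta)-P_\ell^{-\nu}(\cos\theta)\,\dfrac{d}{d\theta}P_\ell^{\nu}(\cos\theta)$. Evaluating the constant from the $\theta\to0^{+}$ asymptotics above, and using $\Gamma(1+\nu)\Gamma(1-\nu)=\nu\pi/\sin(\nu\pi)$, gives for every $\ell$
$$
W(\theta)=\frac{2\sin(\nu\pi)}{\pi\,\sin\theta}>0\qquad\text{on }(0,\pi),
$$
the positivity because $\nu\in(0,1)$. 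Now take $\ell=\ell_1$ and evaluate at $\theta=\theta_1$: since $P_{\ell_1}^{-\nu}(\cos\theta_1)=0$, the identity collapses to $W(\theta_1)=P_{\ell_1}^{\nu}(\cos\theta_1)\cdot\frac{d}{d\theta}P_{\ell_1}^{-\nu}(\cos\theta)\big|_{\theta_1}$; the left side is positive and, by Step 1, the derivative is negative, so $P_{\ell_1}^{\nu}(\cos\theta_1)<0$. On the other hand $P_0^{\nu}(\cos\theta_1)>0$ and $\ell\mapsto P_\ell^{\nu}(\cos\theta_1)$ is entire (hence continuous) in $\ell$, so by the intermediate value theorem it vanishes at some $\ell^{*}\in(0,\ell_1)$. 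As $\ell_2$ is the smallest positive zero of $\ell\mapsto P_\ell^{\nu}(\cos\theta_1)$, this yields $\ell_2\le\ell^{*}<\ell_1$, as claimed.

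The step that needs genuine care is the monotone dependence on $\ell$ of the first nodal point of $P_\ell^{-\nu}(\cos\theta)$ in Step 1, because $\theta=0$ is a singular endpoint; I would justify it by a Pr\"ufer-angle comparison, observing that every solution $P_\ell^{-\nu}(\cos\theta)$ has the \emph{same} leading behaviour $c_-\theta^{\nu}$ at $0$, so the Pr\"ufer angles all issue from a common value at $\theta=0^{+}$ and then separate monotonically according to the ordering of the potentials. (Equivalently, the positivity of $P_{\ell_1}^{-\nu}(\cos\theta)$ on $(0,\theta_1)$ is precisely the statement that $\ell_1$ realizes the first Dirichlet eigenvalue of the cap, whose eigenfunction is of one sign — the fact established immediately after this lemma.) Everything else — the Frobenius analysis at $\theta=0$, the explicit values at $\ell=0$, and the evaluation of the Wronskian constant — is routine bookkeeping with the Ferrers functions.
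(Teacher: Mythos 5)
Your argument is correct, but it is organized quite differently from the paper's. The paper takes the two functions at the two \emph{different} degrees, $y_1=P_{\ell_1}^{-\nu}(\cos\theta)$ and $y_2=P_{\ell_2}^{\nu}(\cos\theta)$ (with $\nu=(n-2)/2$), forms their Wronskian $W=y_1'y_2-y_2'y_1$, and integrates the identity $(\sin\theta\,W)'+(\Delta_1-\Delta_2)\sin\theta\,y_1y_2=0$, $\Delta_i=\ell_i(\ell_i+1)$, over $(0,\theta_1)$: since $W(\theta_1)=0$ (both functions vanish there by definition of $\ell_1,\ell_2$) and $\lim_{\theta\to0}\sin\theta\,W=\tfrac{2}{\pi}\sin\bigl(\tfrac{\pi(n-2)}{2}\bigr)>0$, the positivity of $\int_0^{\theta_1}\sin\theta\,y_1y_2\,d\theta$ forces $\Delta_1>\Delta_2$, i.e.\ $\ell_1>\ell_2$, in a single stroke. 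You instead use the exactly constant Wronskian of the pair $P_\ell^{\pm\nu}$ at the \emph{single} degree $\ell=\ell_1$, deduce $P_{\ell_1}^{\nu}(\cos\theta_1)<0$ from the sign of $\frac{d}{d\theta}P_{\ell_1}^{-\nu}(\cos\theta)$ at its first zero, and then invoke analyticity in $\ell$ together with $P_0^{\nu}(\cos\theta_1)>0$ and the intermediate value theorem to place a zero of $\ell\mapsto P_\ell^{\nu}(\cos\theta_1)$ strictly below $\ell_1$. Both proofs hinge on the same $\theta\to0$ asymptotics and on $\Gamma(1+\nu)\Gamma(1-\nu)=\nu\pi/\sin(\nu\pi)$; indeed your evaluation of the Wronskian constant is exactly the paper's computation of $\lim_{\theta\to0}\sin\theta\,W$. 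As for what each route buys: the paper's integrated-Wronskian (Sturm-type) argument needs no continuity or monotonicity in the degree $\ell$, but it does require --- and only asserts ``by hypothesis'' --- that \emph{both} $P_{\ell_1}^{-\nu}(\cos\theta)$ and $P_{\ell_2}^{\nu}(\cos\theta)$ are positive on $(0,\theta_1)$, so that $C>0$; your argument needs the analogous nodal property only for $P_{\ell_1}^{-\nu}$, which is precisely the step you flag as delicate at the singular endpoint and justify via the Pr\"ufer-angle comparison (or, equivalently, via the identification of $\ell_1$ with the first Dirichlet eigenvalue, which is the content of the lemma that follows and whose proof does not rely on this one). In exchange your proof is longer, uses analyticity of the Ferrers function in its degree, and yields the slightly stronger by-product $P_{\ell_1}^{\nu}(\cos\theta_1)<0$. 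In short: the proposal is sound, no less rigorous than the paper on the shared (implicit) nodal assumption, and genuinely different in structure --- a fixed-degree Wronskian plus a sign-flip-and-IVT argument in $\ell$, versus the paper's comparison of the two degrees through one integrated Wronskian identity.
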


\begin{proof}
Let $y_1 = P_{\ell_1}^{\alpha}(\cos\t)$ and $y_2=P_{\ell_2}^{-\alpha}(\cos\t).$ Then $y_1$ and $y_2$ satisfy the equations
\begin{equation}\label{eq:dey1}
y_1'' +\cot\t y_1' + \left( \ell_1(\ell_1+1) - \frac{\alpha^2}{\sin^2\t}\right)y_1 = 0,
\end{equation}
and 
\begin{equation}\label{eq:dey2}
y_2'' +\cot\t y_2' + \left( \ell_2(\ell_2+1) - \frac{\alpha^2}{\sin^2\t}\right)y_2 = 0
\end{equation}
respectively.

\bigskip

Let $W  = y_1' \, y_2 - y_2'\, y_1$ be  the Wronskian of $y_2$ and $y_1$.   Then $ W' = y_1''\,  y_2 - y_2'' \,  y_1$.
 Multiplying equation (\ref{eq:dey1}) by $y_2$ and equation (\ref{eq:dey2}) by $y_1$ and substracting, it follows that 
\begin{equation}
(\sin\t  \, W)'  + (\Delta_1- \Delta_2)\sin\t  \, y_1 \, y_2 = 0,
\label{wronskian}
\end{equation}
where $\Delta_1 =  \ell_1(\ell_1+1)$ and $\Delta_2= \ell_2(\ell_2+1)$.  To prove the lemma it suffices to show that $\Delta_1 > \Delta_2$.  

\bigskip
Integrating (\ref{wronskian}) in $\theta$  between $0$ and $\t_1$,  we get, 
\begin{equation}
\sin\t_1 W(\t_1) - \lim_{\t\rightarrow 0} \sin\t  \, W(\t) + (\Delta_1-\Delta_2) C = 0
\label{intwronskian}
\end{equation}
where $C = \displaystyle\int_0^{\t_1} \sin\t   \, y_1(\t) \,  y_2(\t) \, d\t >0$ by hypothesis. Since $W(\t_1) = 0,$ it suffices to show that $\lim_{\t\rightarrow 0} \sin\t W(\t) >0.$ 
The series expansion of the associated Legendre functions around $\theta=0$  is given  by 
\begin{equation}
 P_{\ell}^{\nu}(\cos\t)  =\frac{1}{\Gamma(1-\nu)}\left(\cot\frac{\t}{2}\right)^\nu \,_2F_1\left(-\ell,\ell+1,1-\nu,\sin^2\frac{\t}{2}\right), 
 \label{Taylor}
 \end{equation}
in terms of the hypergeometric function, 
\begin{equation}
_2F_1(\sigma,\beta,\gamma,z) = \frac{\Gamma(\gamma)}{\Gamma(\sigma)\Gamma(\beta)}\sum_{k=0}^{\infty}\frac{\Gamma(k+\sigma)\Gamma(k+\beta)}{\Gamma(k+\gamma)n!}z^k.
\label{hyper}
\end{equation}
From (\ref{Taylor}) and (\ref{hyper}), and using that  $-1<\alpha<0$,  the behavior of $y_1$, $y_2,$ $y_1'$ and $y_2'$ in a neighborhood of the origin to leading order is given by
$$ 
y_1 \approx \frac{1}{\Gamma(1-\alpha)}\left(\cot\frac{\t}{2}\right)^{\alpha}, 
$$
$$ 
y_2 \approx \frac{1}{\Gamma(1+\alpha)}\left(\cot\frac{\t}{2}\right)^{-\alpha}, 
$$
$$ 
y_1' \approx  \frac{\alpha}{\Gamma(1-\alpha)}  \left(\cot\frac{\t}{2}\right)^{\alpha-1}\left( \frac{-1}{2\sin^2\frac{\t}{2}}\right),
$$
and 
$$ 
y_2' \approx - \frac{\alpha}{\Gamma(1+\alpha)} \left(\cot\frac{\t}{2}\right)^{-\alpha-1}\left( \frac{-1}{2\sin^2\frac{\t}{2}}\right).
$$
Using this behavior of $y_1(\theta)$,  $y_2(\theta)$,  $y_1'(\theta)$, and  $y_2'(\theta)$,  for small $\theta$, after some calculations we get 
\begin{equation}
\lim_{\t \to  0}\sin\t \, W(\t) = \frac{2}{\pi} \sin \left(\frac{\pi(d-2))}{2} \right) > 0, 
\label{boundarywronskian}
\end{equation}
for all  $2<d<4$. To obtain (\ref{boundarywronskian}) we have used that $\alpha =(2-d)/2$ and the fact that
$$
\Gamma(1+\alpha) \, \Gamma(1-\alpha) = \frac{\pi \alpha}{\sin (\pi \, \alpha)}.
$$
\end{proof}

\begin{lemma}
 Let $\lambda_1$ be the first positive eigenvalue of 
\begin{equation}
-u''(\theta) - (d-1) \cot \theta \, u' (\theta) = \lambda u(\theta)
\label{eq:a1}
\end{equation} 
in the interval $(0,\theta_1)$ with $u'(0)=0$ and $u(\theta_1)=0$. 
Then, 
$$
\lambda_1 =  \frac{1}{4} [(2 \ell_1 +1)^2 - (d-1)^2],
$$ 
where $\ell_1$ is the first positive value of $\ell$ for which the associated Legendre function ${\rm P}_{\ell}^{(2-d)/2} (\cos\theta_1)$ vanishes.  
\end{lemma}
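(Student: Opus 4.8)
The plan is to transform the linear equation (\ref{eq:a1}) into the associated Legendre equation that already appeared in the proof of the previous lemma, to single out the branch of solutions that is admissible at the pole $\theta=0$, and then to impose the Dirichlet condition at $\theta_1$ and read off the spectrum.

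First I would substitute $u(\theta)=(\sin\theta)^{(2-n)/2}\,v(\theta)$ in (\ref{eq:a1}). A direct computation, using $\cot^2\theta=\sin^{-2}\theta-1$ and $\bigl(\tfrac{n-2}{2}\bigr)^2+\tfrac{n-2}{2}=\tfrac{n(n-2)}{4}$, shows that $v$ satisfies the associated Legendre equation
$$
v''+\cot\theta\,v'+\Bigl(\ell(\ell+1)-\frac{\alpha^2}{\sin^2\theta}\Bigr)v=0,\qquad\alpha=\frac{2-n}{2},
$$
appearing in (\ref{eq:dey1}), where $\ell$ is a root of $\ell(\ell+1)=\lambda+\tfrac{n(n-2)}{4}$. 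Hence a fundamental system for (\ref{eq:a1}) is $\{(\sin\theta)^{(2-n)/2}P_\ell^{\alpha}(\cos\theta),\ (\sin\theta)^{(2-n)/2}P_\ell^{-\alpha}(\cos\theta)\}$, the two being linearly independent because $|\alpha|=(n-2)/2\in(0,1)$ is not an integer; writing $\ell=\ell(\lambda):=-\tfrac12+\sqrt{\lambda+\tfrac{(n-1)^2}{4}}$ for the root $\ge-\tfrac12$, the correspondence $\lambda\mapsto\ell(\lambda)$ is strictly increasing.

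Next I would select the admissible solution. The origin is a regular singular point of (\ref{eq:a1}) with Frobenius exponents $0$ and $2-n$; equivalently, by the expansion (\ref{Taylor}), $P_\ell^{\pm\alpha}(\cos\theta)=O(\theta^{\mp\alpha})$ as $\theta\to0^+$, so the two solutions above behave, respectively, like a nonzero constant and like $\theta^{2-n}$. Since $n>2$, the second one is unbounded near $0$ (and has no finite derivative there), so every nontrivial solution of (\ref{eq:a1}) that is regular at the origin is, up to a constant, $u_\lambda(\theta)=(\sin\theta)^{(2-n)/2}P_{\ell(\lambda)}^{(2-n)/2}(\cos\theta)$, which indeed satisfies $u_\lambda'(0)=0$. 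As $\sin\theta_1>0$, the condition $u_\lambda(\theta_1)=0$ is equivalent to $P_{\ell(\lambda)}^{(2-n)/2}(\cos\theta_1)=0$, so $\lambda$ is an eigenvalue precisely when $\ell(\lambda)$ is a zero of $\ell\mapsto P_\ell^{(2-n)/2}(\cos\theta_1)$. To locate the first eigenvalue I would rule out eigenvalues $\le0$: multiplying (\ref{eq:a1}) by $u\sin^{n-1}\theta$ and integrating by parts over $(0,\theta_1)$ (the boundary terms vanish, at $\theta_1$ because $u=0$ and at $0$ because $u$ is bounded with $u'=O(\theta)$) gives $\int_0^{\theta_1}(u')^2\sin^{n-1}\theta\,d\theta=\lambda\int_0^{\theta_1}u^2\sin^{n-1}\theta\,d\theta$, which forces $\lambda>0$ for $u\not\equiv0$. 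Therefore $P_\ell^{(2-n)/2}(\cos\theta_1)\ne0$ for every $\ell\in[-\tfrac12,\tfrac{n-2}{2}]$ (these correspond to $\lambda\le0$), so the first positive zero $\ell_1$ is actually the smallest zero of $\ell\mapsto P_\ell^{(2-n)/2}(\cos\theta_1)$; since $\lambda\mapsto\ell(\lambda)$ is increasing, the smallest eigenvalue $\lambda_1$ is the one with $\ell(\lambda_1)=\ell_1$, and solving $\ell_1=-\tfrac12+\sqrt{\lambda_1+\tfrac{(n-1)^2}{4}}$ gives $\lambda_1=\bigl(\ell_1+\tfrac12\bigr)^2-\tfrac{(n-1)^2}{4}=\tfrac14\bigl[(2\ell_1+1)^2-(n-1)^2\bigr]$.

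The step I expect to be the main obstacle is the careful analysis at the singular endpoint $\theta=0$: verifying that ``regular at $0$'' (equivalently $u'(0)=0$, or membership in $H^1$ near the origin) selects exactly the $(\sin\theta)^{(2-n)/2}P_\ell^{(2-n)/2}(\cos\theta)$ branch rather than the $\theta^{2-n}$ one, and justifying that the zeros of $P_\ell^{(2-n)/2}(\cos\theta_1)$ with $\ell\le(n-2)/2$ may be discarded. Everything else --- the substitution, the identification with (\ref{eq:dey1}), and the monotonicity of $\lambda\mapsto\ell$ --- is routine.
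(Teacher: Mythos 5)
Your proof is correct and follows essentially the same route as the paper: the substitution $u=(\sin\theta)^{(2-n)/2}v$, reduction to the associated Legendre equation with $\ell(\ell+1)=\lambda+\tfrac{n(n-2)}{4}$, selection of the regular branch $(\sin\theta)^{(2-n)/2}P_\ell^{(2-n)/2}(\cos\theta)$ from the series expansion at $\theta=0$, and the Dirichlet condition at $\theta_1$. The only (minor) difference is the final identification $\ell(\lambda_1)=\ell_1$: you rule out eigenvalues $\lambda\le 0$ by an energy identity and use the monotonicity of $\lambda\mapsto\ell$, whereas the paper appeals to the positivity of the first eigenfunction on $[0,\theta_1)$; both are adequate.
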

\begin{proof}
Let $\alpha=(2-d)/2$, and set 
\begin{equation}
u(\theta) = (\sin \theta)^{\alpha} \, v(\theta).
\label{eq:a2}
\end{equation}
Then $v(\theta)$ satisfies the equation, 
\begin{equation}
v''(\theta) + \frac{\cos \theta}{\sin \theta} v'(\theta) +   \left(\lambda_1 +\alpha(\alpha-1) - \frac{\alpha^2}{\sin^2 \theta} \right) v (\theta)= 0. 
\label{eq:a3}
\end{equation}
In the particular case when $d=3$, $\alpha=-1/2$ and this equation becomes, 
\begin{equation}
v''(\theta) + \frac{\cos \theta}{\sin \theta} v'(\theta) +  \left(\lambda_1 + \frac{3}{4} - \frac{1}{4 \, \sin^2 \theta} \right) v (\theta) = 0,
\label{eq:a4}
\end{equation}
whose positive regular solution is given by, 
\begin{equation}
v(\theta) = C \, \frac{\sin \left(\sqrt{1+\lambda_1}  \, \theta \right)}{\sqrt{\sin \theta}}.
\label{eq:a5}
\end{equation}
Hence, in this case, 
\begin{equation}
u(\theta) = C \, \frac{\sin \left(\sqrt{1+\lambda_1}  \, \theta \right)}{\sin \theta}.
\label{eq:a6}
\end{equation}
Imposing the boundary condition $u (\theta_1)=0$, in the case $d=3$, we find that, 
\begin{equation}
\lambda_1 (\theta_1) = \frac{\pi^2- \theta_1^2}{{\theta_1}^2}. 
\label{eq:a7}
\end{equation}
Now, for any $2<d<4$ the solutions of (\ref{eq:a4}) are $P_{\ell}^\alpha(\cos\t)$ and $P_{\ell}^{-\alpha}(\cos\t),$
with 
\begin{equation}
 \alpha = (2-d)/2, 
\label{eq:a9}
\end{equation}
and $\ell$ the positive root of 
\begin{equation}
\ell(\ell+1)  = \lambda_1 + \alpha(\alpha-1),
\label{eq:a10}
\end{equation}
that is, 
$$
\ell = \frac{1}{2}\left( \sqrt{4\lambda_1 +(d-1)^2}-1\right).
$$ 
Taking into account (\ref{Taylor}) and (\ref{hyper}) we see that 
the regular solution of (\ref{eq:a1}) is given by
\begin{equation}
u(\t) = \sin^{\alpha}\t \,  P_{\ell}^\alpha(\cos\t).
\label{DirEF}
\end{equation}
Finally, the boundary conditions $u(\t_1) = 0$ and $u(\t)>0$ if $0\le \t <\t_1$ imply that $\ell= \ell_1$, and so 
$$
\lambda_1 =  \frac{1}{4} [(2 \ell_1 +1)^2 - (d-1)^2].
$$
Here, $\ell_1$ is the first positive value of $\ell$ for which the associated Legendre function ${\rm P}_{\ell}^{(2-d)/2} (\cos\theta_1)$ vanishes.  
\end{proof}

\bigskip
\bigskip

\section{Existence of solutions}\label{sec:existencia}

Let $D$ be a geodesic ball on $\mathbb{S}^n.$ If $n$ is a natural number, the solutions of 

\begin{equation}\label{eq:111} \left\{ \begin{array}{lcr}
\mbox - \Delta_{\mathbb{S}^n} u = \lambda u + u^{p}&\text{on}&D\\
\mbox u>0 & \text{on} &D  \\
\mbox u=0 & \text{on} & \partial D, \end{array}\right.
\end{equation}

\noindent where $p=\frac{n+2}{n-2}$ correspond to minimizers of 

\begin{equation}\label{eq:Q}
Q_{\lambda}(u) = \frac{\displaystyle\int_{D}(\nabla u)^2q^{n-2}\,dx - \lambda \displaystyle\int_{D} u^2 q^n \, dx}{\left(\displaystyle\int_{D} u^\frac{2n}{n-2}q^n\,dx\right)^\frac{n-2}{n}}. 
\end{equation}

\noindent Here $q(x) = \frac{2}{1+|x|^2}, $ so that $ds = q(x) dx$ is the line element of $\mathbb{S}^n;$  and $x\in D',$ where $D'$ is the projection of the stereographic ball (see Figure 3).

\begin{figure}[h!]
    \centering
    \includegraphics[width=0.4\textwidth]{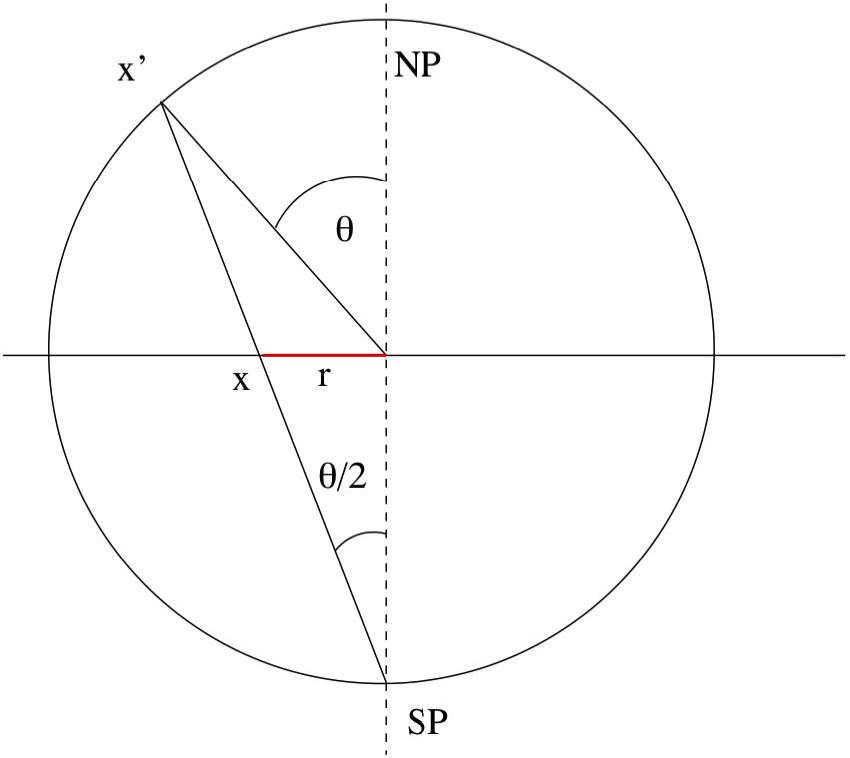}
    \label{estereografica}
    \caption{Stereographic mapping of a geodesic cap into a domain in the equatorial plane.}
\end{figure}

\bigskip 

 If $u$ is radial, then even for fractional $n$, which we will denote henceforth by $d$ to make it consistent with our notation in the Introduction,  we can write 
 
 \begin{equation}\label{eq:Qr}
Q_{\lambda}(u) = \frac{ \omega_d\displaystyle\int_0^R r^{d-1}q(r)^{d-2} u'(r)^2\, dr - \lambda  \omega_d \displaystyle\int_0^R   r^{d-1} q(r)^d u^2(r)\, dr}{\left(\omega_d \displaystyle\int_0^R r^{d-1} q(r)^d u(r)^\frac{2d}{d-2}\, dr \right)^\frac{d-2}{d}}. 
\end{equation}

 \noindent Here $R$ corresponds to the stereographic projection of $\theta_1.$

As in \cite{BaBe02}, let 

\begin{equation}\label{eq:Sl}
S_{p,\lambda}(D) = \inf_{\substack{ u\in H_0^1 \\  ||u||_{p+1} = 1}} \{ ||\nabla u ||_2^2 - \lambda ||u||_2^2 \},
\end{equation}

\noindent so that $S_{p,\lambda}\le Q_{\lambda}(u),$ and let 

\begin{equation}\label{eq:S}
S  = \inf_{\substack{ u\in H_0^1 \\  ||u||_{p+1} = 1}}{||\nabla u||_2^2}. 
\end{equation}
Here, we are abusing notation and  still  using $||\nabla u ||_2^2$, etc.,  to mean  $\omega_d \int_0^R r^{d-1} \, q(r)^{d-2} u'(r)^2\, dr$, etc.,  when $d$ is fractional. 
By the Brezis--Lieb compactness lemma \cite{BrLi83}, it is known that in $\mathbb{R}^d,$ if there is a function that satisfies $Q_{\lambda}(u)<S, $ then the minimizer for $Q_{\lambda}$ is attained. The minimizer is positive and satisfies the Brezis--Nirenberg equation.

\begin{lemma} 

Let 2<d<4 and 
$$
\frac{1}{4} [(2 \ell_2 +1)^2 - (d-1)^2] < \lambda < \frac{1}{4} [(2 \ell_1 +1)^2 - (d-1)^2] ,
$$
where $\ell_1$ (respectively $\ell_2$) is the first positive value of $\ell$ for which the associated Legendre function ${\rm P}_{\ell}^{(2-d)/2} (\cos\theta_1)$ (respectively
 ${\rm P}_{\ell}^{(d-2)/2} (\cos\theta_1)$) vanishes. Then there is a unique  positive solution to 
 \begin{equation}
-u''(\theta) - (d-1) \cot \theta \, u'(\theta)  = \lambda u(\theta) + u(\theta)^{(d+2)/(d-2)}
\label{eq:ODE1}
\end{equation} 
with $u'(0)=u(\theta_1)=0$. 
\end{lemma}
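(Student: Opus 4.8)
The plan is to run the Brezis--Nirenberg variational scheme in the radial, weighted form $(\ref{eq:Qr})$, with a test function obtained by gluing the Aubin--Talenti bubble near the pole to the explicit Legendre--function solution of the linearized Dirichlet problem away from it; the lower endpoint $\lambda_*:=\tfrac14[(2\ell_2+1)^2-(n-1)^2]$ appears as the value of $\lambda$ at which the regular part of that linear solution at the pole changes sign, exactly as the Bessel zero $j_{-(n-2)/2,1}$ does in Janelli's flat problem \cite{Ja99}. Write $p=(n+2)/(n-2)$, $\alpha=(2-n)/2$, and $\lambda_1=\tfrac14[(2\ell_1+1)^2-(n-1)^2]$, which by the second preliminary lemma is the first Dirichlet eigenvalue of the linear radial operator on $(0,\theta_1)$. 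For $\lambda<\lambda_1$ the numerator quadratic form of $(\ref{eq:Qr})$ is positive definite, so $0<S_{p,\lambda}(D)\le S$, the upper bound because the bubbles are admissible and, since $n<4$, have $\|\cdot\|_2^2/\|\cdot\|_{p+1}^2\to0$. By the Brezis--Lieb argument --- valid on subdomains of the hemisphere, as recalled above --- the strict inequality $S_{p,\lambda}(D)<S$ implies $S_{p,\lambda}(D)$ is attained at some $w\ge0$; replacing $w$ by $S_{p,\lambda}(D)^{1/(p-1)}w$ turns the Euler--Lagrange equation into $(\ref{BBSnRadial})$, and the strong maximum principle together with interior elliptic regularity give a positive classical solution with $u'(0)=u(\theta_1)=0$. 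Hence it suffices to show $S_{p,\lambda}(D)<S$ for all $\lambda\in(\lambda_*,\lambda_1)$, and since $\lambda\mapsto S_{p,\lambda}(D)$ is nonincreasing it is enough to do so for $\lambda$ in a right neighbourhood of $\lambda_*$.

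To build the test function, pass to stereographic coordinates so that $D\cong B_R$ with $q(r)=2/(1+r^2)$. Fix $\lambda\in(\lambda_*,\lambda_1)$, put $\ell=\tfrac12\bigl(\sqrt{4\lambda+(n-1)^2}-1\bigr)\in(\ell_2,\ell_1)$, and let $G_\lambda$ be the solution, singular at $\theta=0$, of $-G''+(n-1)\cot\theta\,G'=\lambda G$ on $(0,\theta_1)$ with $G(\theta_1)=0$. As in the proof of the second preliminary lemma, $G_\lambda=a(\sin\theta)^\alpha P_\ell^{-\alpha}(\cos\theta)+b(\sin\theta)^\alpha P_\ell^{\alpha}(\cos\theta)$ with $a\neq0$, and from $(\ref{Taylor})$ --- noting that $2<n<4$ precludes any resonance producing a $\theta^0$ contribution from the singular branch ---
\[
G_\lambda(\theta)=A\,\theta^{2-n}\bigl(1+O(\theta^{2})\bigr)+g_\lambda+o(1),\qquad A\neq0,
\]
where the regular part $g_\lambda$ is a nonzero multiple of $b$, hence of $P_\ell^{-\alpha}(\cos\theta_1)$ through the boundary relation $a\,P_\ell^{-\alpha}(\cos\theta_1)+b\,P_\ell^{\alpha}(\cos\theta_1)=0$ (here $P_\ell^{\alpha}(\cos\theta_1)\neq0$ because $\ell<\ell_1$). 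Normalize $G_\lambda$ so that $A>0$. Let $u_\epsilon$ coincide with the truncated Aubin--Talenti bubble $(\epsilon+r^2)^{-(n-2)/2}$ in a fixed neighbourhood of the pole and, near $\partial D$, with a suitable multiple of $G_\lambda$; since both profiles behave like $\mathrm{dist}^{\,2-n}$ in the overlap, they match up to a negligible correction, so $u_\epsilon$ is admissible with $u_\epsilon(\theta_1)=0$.

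Insert $u_\epsilon$ into $(\ref{eq:Qr})$ and let $\epsilon\to0$. The inner region reproduces the Euclidean Sobolev quotient --- the weight $q\to2$ at the pole, and the ensuing factor $2^{n-2}$ cancels between the numerator and the $(n-2)/n$ power of the denominator --- so the leading term is $S$. The $L^2$ term $\int_0^R r^{n-1}q^n u_\epsilon^2\,dr$ stays bounded (again because $n<4$), which forces the first correction to be of order $\epsilon^{(n-2)/2}$; since $0<\tfrac{n-2}{2}<1$ this dominates the $O(\epsilon)$ terms coming from expanding $q$ about the pole and from the tail of the bubble, as well as the $\epsilon^{(4-n)/2}$ term carried by the $L^2$ integral (which is of strictly lower order in each of the subranges $2<n<3$ and $3<n<4$). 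An integration by parts (Green's identity) localizes the bubble--$G_\lambda$ cross terms to the regular part of $G_\lambda$ at the pole, so that the explicit $\lambda$ and the outer profile combine into $g_\lambda$ and
\[
Q_\lambda(u_\epsilon)=S-C_n\,g_\lambda\,\epsilon^{(n-2)/2}+o\bigl(\epsilon^{(n-2)/2}\bigr),\qquad C_n>0.
\]
At $\lambda=\lambda_*$ one has $\ell=\ell_2$ and $P_{\ell_2}^{-\alpha}(\cos\theta_1)=0$, so $b=0$, $G_{\lambda_*}=(\sin\theta)^\alpha P_{\ell_2}^{-\alpha}(\cos\theta)$ is purely singular, and $g_{\lambda_*}=0$; and since $\ell_2$ is the \emph{first} zero of $\ell\mapsto P_\ell^{-\alpha}(\cos\theta_1)$, which is positive for small $\ell$, the chosen normalization makes $g_\lambda>0$ for $\lambda$ just above $\lambda_*$. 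Therefore $Q_\lambda(u_\epsilon)<S$ for such $\lambda$ and small $\epsilon$, hence $S_{p,\lambda}(D)<S$ there, and by monotonicity $S_{p,\lambda}(D)<S$ throughout $(\lambda_*,\lambda_1)$; the reduction above then yields the positive solution.

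The main obstacle is this $\epsilon$-expansion in the fractional regime $2<n<4$: unlike the integer cases $n\ge5$ (correction $O(\epsilon^{2})$) or $n=4$ (correction $O(\epsilon|\log\epsilon|)$), the decisive term here is the non-integer power $\epsilon^{(n-2)/2}$, which must be shown to beat the several $O(\epsilon)$ and fractional-power contributions of the conformal weight, the truncation, and the $L^2$ integral, and whose coefficient must be pinned down, via the Green's identity, to be precisely $C_n g_\lambda$ with $C_n>0$ --- after which the sign of $g_\lambda$ near $\lambda_*$ closes the argument.
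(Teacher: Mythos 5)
Your overall reduction is the same as the paper's (show $S_{p,\lambda}(D)<S$ and invoke Brezis--Lieb as extended by Bandle--Peletier, then rescale the minimizer), but your mechanism for producing the sharp endpoint is genuinely different. The paper takes $u_\epsilon(r)=\varphi(r)(\epsilon+r^2)^{-(n-2)/2}$ with a \emph{fixed smooth} cap $\varphi$, proves via three explicit claims that $Q_\lambda(u_\epsilon)=S+C_n\,\epsilon^{(n-2)/2}\,T(\varphi)+\mathcal{O}(\epsilon)$ with $T$ an explicit quadratic functional of $\varphi$, and then minimizes $T$ subject to $\int_0^R r^{3-n}q^n\varphi^2\,dr=1$; the minimal eigenvalue $\mu$ is identified with $\tfrac14[(2\ell_2+1)^2-(n-1)^2]$ by transforming the Euler equation into the Legendre equation, with regularity at the pole selecting the $P_\ell^{-\alpha}$ branch and the boundary condition selecting $\ell=\ell_2$. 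You instead fix $\lambda$, introduce the singular solution $G_\lambda$ vanishing at $\theta_1$, and aim to show the $\epsilon^{(n-2)/2}$-coefficient equals $-C_n g_\lambda$, the regular part of $G_\lambda$ at the pole, closing with the sign of $g_\lambda$ just above $\lambda_*$ and monotonicity of $S_{p,\lambda}$ in $\lambda$. Your identification of $\lambda_*$ as the value where $g_\lambda$ vanishes (i.e. $b=0$ at $\ell=\ell_2$) is correct and is the dual description of the paper's eigenvalue $\mu$; the monotonicity reduction is a nice economy.

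However, the decisive step is missing, and the one quantitative claim you make about it is wrong as stated. The expansion $Q_\lambda(u_\epsilon)=S-C_n g_\lambda\,\epsilon^{(n-2)/2}+o(\epsilon^{(n-2)/2})$ is asserted, not derived, and the matching of the truncated bubble to a multiple of $G_\lambda$ over a \emph{fixed} overlap annulus is not ``negligible'': there the bubble is $r^{2-n}+\mathcal{O}(\epsilon)$ while $G_\lambda=r^{2-n}(1+\mathcal{O}(r^2))+g_\lambda+o(1)$, so the mismatch is $\mathcal{O}(1)$ --- it contains $g_\lambda$ itself --- and an $\mathcal{O}(1)$ change of the test function on a fixed annulus shifts the numerator of (\ref{eq:Qr}) by $\mathcal{O}(1)$, which after division by the $\epsilon^{(2-n)/2}$ leading term is exactly the relative order $\epsilon^{(n-2)/2}$ you are trying to extract. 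So the gluing must be done with an $\epsilon$-dependent matching radius or with explicit correction terms, and the Green's-identity computation you only name must actually be carried out; that computation is the heart of the proof, and in the paper it is replaced by Claims \ref{cl:primera}--\ref{cl:tercera} together with the minimization of $T$. Relatedly, your bookkeeping for the $L^2$ error is off: for $3<n<4$ one has $\epsilon^{(4-n)/2}\gg\epsilon^{(n-2)/2}$, so that term is not ``of strictly lower order''; what saves the estimate (and what the paper uses) is that it is an absolute $o(1)$ error in the numerator, hence contributes only $\mathcal{O}(\epsilon)$ to $Q_\lambda$ after dividing by the $\epsilon^{(2-n)/2}$ leading term. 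Since this fractional-$n$ bookkeeping is precisely what you flag as the main obstacle and then defer, the proposal as written has a genuine gap at its core, even though the route (regular part of the Green's-type solution plus monotonicity) is viable and would, if the expansion were established, give the same threshold.
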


\begin{remark} 
In this Section we only give the existence proof. The proof of uniqueness follows from a general result of Kwong and Li \cite{KwLi92} and it is sketched in Appendix B. 
\end{remark}

\begin{proof}
It suffices to show that there exists $u\in H_0^1(D)$ such that $ Q_{\lambda}(u)< S$. 
Let $\varphi$ be a smooth function such that $\varphi (0)=1,$ $\varphi '(0) = 0$ and $\varphi (R) = 0, $ where $R$ is the stereographic projection of $\theta_1.$  For $\epsilon>0,$ let
\begin{equation}\label{eq:u}
u_{\epsilon}(r) = \frac{\varphi (r)}{(\epsilon+r^2)^\frac{d-2}{2}}.
 \end{equation}

\noindent We claim that for $\epsilon$ small enough, $Q_{\lambda}(u_{\epsilon}) < S$. 
In the next three claims we compute $||\nabla u_{\epsilon}||_2^2,$ $||u_{\epsilon}||_{p+1}^2$ and $||u_{\epsilon}||_2^2.$

\bigskip

\begin{claim}\label{cl:primera}

\begin{equation}\label{eq:cl11}\begin{split}
 \omega_d\displaystyle\int_0^R r^{d-1}q(r)^{d-2} \ue'(r)^2\, dr = &\, \omega_d \displaystyle\int_0^R \varphi '(r)^2r^{3-d}q(r)^{d-2}\,dr -\omega_d (d-2)^2 \displaystyle\int_0^R \varphi(r)^2r^{3-d}q(r)^{d-1}\, dr\\
&+ \omega_d d(d-2)2^{d-2}D_d\epsilon^{\frac{2-d}{2}}+  \mathcal{O}(\epsilon^\frac{4-d}{2}),\\
\end{split}
\end{equation}
where
\begin{equation}
 D_d = \frac{1}{2}\frac{\Gamma\left(\frac{d}{2}\right)^2}{\Gamma(d)},  \qquad \omega_d = \frac{2\pi^\frac{d}{2}}{\Gamma\left(\frac{d}{2}\right)}.
\end{equation}
\end{claim}

\begin{proof}
Let 
$$
I(\epsilon) =  \omega_d\displaystyle\int_0^R r^{d-1}q(r)^{d-2} \ue'(r)^2\, dr.
$$
Then 
\begin{equation}\label{eq:cl12}\begin{split}
I(\epsilon) =& \,  \omega_d\displaystyle\int_0^R r^{d-1}q^{d-2}\left(\frac{\varphi'^2}{(\epsilon+r^2)^{d-2}} -\frac{2(d-2)r\varphi\varphi'}{(\epsilon+r^2)^{d-1}} +\frac{r^2\vf^2(d-2)^2}{(\ep+r^2)^d}  \right)\, dr.\\
\end{split}\end{equation}
Integrating by parts the term with $\vf\vf',$ we obtain $I(\epsilon) = I_1+ I_2+I_3,$ where 
$$ 
I_1(\ep) = \omega_d \displaystyle\int_0^R r^{d-1}q^{d-2}  \frac{\vf'^2}{(\ep+r^2)^{d-2}}\,dr;
$$
$$ 
I_2(\ep) = \omega_d(d-2)^2 \displaystyle\int_0^R r^d q^{d-3} q'\,\frac{\vf^2}{(\epsilon+r^2)^{d-1}}\, dr;
$$
and 
$$ 
I_3(\ep) = \omega_d (d-2)d \ep \displaystyle\int_0^R q^{d-2}r^{d-1} \frac{\vf^2}{(\epsilon+r^2)^d}.
$$
We begin by showing that 
$$
I_1(\ep) =  \omega_d \displaystyle\int_0^R r^{3-d}q^{d-2}\vf'^2\, dr  +\mathcal{O}(\epsilon).
$$
Notice that 
$$
I_1(0) = \omega_d \displaystyle\int_0^R r^{3-d}q^{d-2}\vf'^2\, dr 
$$
\noindent converges for $d<4.$ It suffices to show that $I_1(\epsilon) - I_1(0) = \mathcal{O}(\epsilon).$ We can write 
$$ 
I_1(\epsilon) - I_1(0) = \omega_d \displaystyle\int_0^R r^{d-1} q^{d-2} \vf'^2 \displaystyle\int_0^\ep \frac{d-2}{(a+r^2)^{d-1}}\, da \, dr.
$$ 
But 
$$
\displaystyle\int_0^R q^{d-2} r^{d-1} \frac{\vf'^2}{(a+r^2)^{d-1}}\, dr \le  C \displaystyle\int_0^R 2^{d-2} r^{3-d}\, dr,
$$
for some constant $C$, which converges if $d<4,$ thus yielding the desired result. 

\bigskip

\noindent
Next let us consider $I_2.$ We will show that 
$$ 
I_2(\ep) = -\omega_d (d-2)^2 \displaystyle\int_0^R q^{d-1}\vf^2 r^{3-d}\, dr + \mathcal{O}(\epsilon^{\frac{4-d}{2}}).
$$ 
Notice that $q' = -q^2 r$,  so that 
$$ 
I_2(\epsilon)  = -\omega_d (d-2)^2 \displaystyle\int_0^R q^{d-1}r^{d+1}\frac{\vf^2}{(\ep+r^2)^{d-1}}\, dr.
$$ 
As in the previous integral, let $ I_2(\epsilon) = I_2(0) + I_2(\ep)-I_2(0)$. 
Then it suffices to show that $I_2(\ep)-I_2(0) =  \mathcal{O}(\epsilon^{\frac{4-d}{2}})$.
We can write 
\begin{equation}
\begin{split} 
I_2(\ep)-I_2(0) =&\,  \omega_d (d-2)^2 \displaystyle\int_0^R \vf^2 q^{d-1}r^{d+1} \left( \frac{1}{r^{2d-2}} - \frac{1}{(\ep+r^2)^{d-1}}\right) \, dr \\
=&\, \omega_d (d-2)^2 \displaystyle\int_0^R q^{d-1}r^{d+1}\left[(\vf^2-1) + 1\right] \displaystyle\int_0^{\ep} \frac{d-1}{(a+r^2)^d}\, da\, dr.\\
\end{split}
\end{equation}
Let 
$$ 
I_{21}(\epsilon) = \displaystyle\int_0^R q^{d-1}r^{d+1}\displaystyle\int_0^\epsilon \frac{d-1}{(a+r^2)^d}\, da\, dr,
$$
and 
$$ 
I_{22}(\ep) = \displaystyle\int_0^R q^{d-1} r^{d+1} (\vf^2-1) \displaystyle\int_0^{\ep} \frac{d-1}{(a+r^2)^d}\, da\, dr.
$$
Then, since $q^d \le 2^d$,  and making the change of variables $r=s \, \sqrt{a},$ it follows that 
$$ 
I_{21}(\ep) \le 2^{d-1}(d-1) \displaystyle\int_0^\ep a^\frac{2-d}{2} \displaystyle\int_0^{\infty} \frac{s^{d+1}}{(1+s^2)^d}\, ds\, da.
$$ 
The inner integral converges if $d>2,$ so it follows that 
$$ I_{21}(\ep)  = \mathcal{O}(\epsilon^\frac{4-d}{2}). 
$$
Also, since by hypothesis $\vf(0)=1$ and $\vf'(0) = 0,$ it follows that $\vf^2-1 \le Cr^2.$ Thus, 
$$ 
I_{22}(\ep)  \le C \,  2^{d-1}(d-1) \displaystyle\int_0^\ep  \displaystyle\int_0^R   r^{3-d} \, dr \, da.
$$
The inner integral converges if $d<4,$ so it follows that $I_{22}(\ep) = \mathcal{O}(\ep)$. 
In particular, since $d \ge 2$,  $I_{22}(\ep) = \mathcal{O}(\epsilon^\frac{4-d}{2})$ and 
$$ 
I_2(\ep)-I_2(0)   = \mathcal{O}(\epsilon^\frac{4-d}{2}). 
$$

\bigskip 

\noindent
Finally, we must show that 
$$
I_3(\ep) =  \omega_d  \, d(d-2)2^{d-2} \, D_d \epsilon^{\frac{2-d}{2}}+  \mathcal{O}(\epsilon^\frac{4-d}{2}).
$$ 
Writing 
$$ 
q^{d-2}\vf^2 = q^{d-2}(\vf^2-1) + (q^{d-2}-2^{d-2}) + 2^{d-2},
$$
we have that $I_3 = \omega_d  \, (d-2) \, d \, (I_{31}+I_{32}+ I_{33})$,  where 
$$
I_{31}  =  \displaystyle\int_0^R \frac{\ep \, r^{d-1}q^{d-2}(\vf^2-1)}{(\ep+r^2)^d}\, dr; 
$$
$$ 
I_{32} = \displaystyle\int_0^R \frac{\ep\, r^{d-1} (q^{d-2}- 2^{d-2}) }{(\ep+r^2)^d} \, dr; 
$$
and 
$$ 
I_{33} =  2^{d-2} \displaystyle\int_0^R \frac{\ep\, r^{d-1} }{(\ep+r^2)^d}\, dr.
$$
As before, since $\vf^2-1 \le C \, r^2$,  it follows that 
$$ 
I_{31} \le C \, 2^{d-2} \ep \displaystyle\int_0^R \frac{r^{d+1}}{(\ep+r^2)^d}\, dr.
$$
Letting $ r= s \, \sqrt{\ep}$,  it follows that 
\begin{equation}\label{eq:rn}
\displaystyle\int_0^R \frac{r^{d+1}}{(\ep+r^2)^d}\, dr \le \ep^{\frac{2-d}{2}} 
\displaystyle\int_0^{\infty} \frac{s^{d+1}}{(1+s^2)^d}\, ds = \mathcal{O}(\ep^\frac{2-d}{2}),
\end{equation}
since the integral converges for all $d>2$. Thus, 
$$ 
I_{31} =  \mathcal{O}(\ep^\frac{4-d}{2}).
$$ 
Similarly, and since if $0\le r\le R$ then $2^{d-2} - q^{d-2} \le 2^{d-2} A(R) r^2$,  with $A(R) = (d-2)(1+R^2)^{d-3}$, we have that 
$$ 
|I_{32}| \le 2^{d-2} \, A(R) \, \epsilon \displaystyle\int_0^R \frac{r^{d+1}}{(\ep+r^2)^d}\, dr =  \mathcal{O}(\ep^\frac{4-d}{2}) .
$$
Finally, making the change of variables $r = s \, \sqrt{\ep}$,  it follows that 
$$
I_{33}  = 2^{d-2}\ep^\frac{2-d}{2} \left[ \displaystyle\int_0^{\infty} \frac{s^{d-1}}{(1+s^2)^d} \, ds -  
\displaystyle\int_\frac{R}{\sqrt{\ep}}^{\infty} \frac{s^{d-1}}{(1+s^2)^d}\, ds\right].
$$
But 
\begin{equation}\label{eq:errorcola}
\displaystyle\int_\frac{R}{\sqrt{\ep}}^{\infty} \frac{s^{d-1}}{(1+s^2)^d} \, ds \le  \displaystyle\int_\frac{R}{\sqrt{\ep}}^{\infty} s^{-d-1} \, ds = \mathcal{O}(\epsilon^\frac{d}{2}).
\end{equation}
Moreover, notice that making the change of variables $u = s^2,$ we can write
 \begin{equation}\label{eq:casigama}
  \displaystyle\int_0^{\infty} \frac{s^{d-1}}{(1+s^2)^d}\, ds = \frac{1}{2} 
  \displaystyle\int_0^{\infty} \frac{u^{\frac{d}{2}-1}}{(1+u)^d}\,du = \frac{1}{2} \frac{\Gamma\left(\frac{d}{2}\right)^2}{\Gamma(d)} = D_d.
 \end{equation}
 
\bigskip
\noindent
Here we have used the standard integral 
\begin{equation}\label{eq:integralgamma}
\displaystyle\int_0^{\infty} \frac{x^{k-1}}{(1+x)^{k+m}}\, dx  = \frac{\Gamma(k)\Gamma(m)}{\Gamma(k+m)}
\end{equation}
(see, e.g., \cite{Dw61}, equation 856.11, page 213), which holds for all $m>0$, and for all  $k\,>0$.  Thus, 
$$
I_{33}  = 2^{d-2}\ep^\frac{2-d}{2} D_d + \mathcal{O}(\ep).
$$
This yields the desired estimate for $I_3$. 
\end{proof}

\bigskip

\begin{claim}\label{cl:segunda}
$$
\omega_d  \displaystyle\int_0^R   r^{d-1} \, q^d \, u^2\, dr= \omega_d \displaystyle\int_0^R q^d \,  r^{3-d} \vf^2 \, dr + \mathcal{O}(\ep^\frac{4-d}{2}).
$$ 
\end{claim}

\begin{proof}
Let 
$$
J(\ep) =  \omega_d \displaystyle\int_0^R   r^{d-1} q^d \frac{\vf^2}{(\ep+r^2)^{d-2}}\, dr.
$$ 
Then 
$$
J(0) = \omega_d \displaystyle\int_0^R q^d \, r^{3-d} \vf^2 \, dr.
$$ 
Thus, it suffices to show that $|J(\ep)-J(0)| = \mathcal{O}(\ep^\frac{4-d}{2})$.  We can write
$$
|J(\ep)-J(0)|  = \omega_d \displaystyle\int_0^R q^d\left[ (\vf^2-1)+1\right] r^{d-1}\displaystyle\int_0^\ep \frac{d-2}{(a+r^2)^{d-1}}\, da\, dr.
$$
Let 
\begin{equation}\label{eq:j1}
J_1(\ep) = \displaystyle\int_0^\ep \displaystyle\int_0^R \frac{q^d \, r^{d-1}}{(a+r^2)^{d-1}}\, dr\, da,
\end{equation}
and
$$ 
J_2(\ep) = \displaystyle\int_0^R (\vf^2-1)q^d \, r^{d-1}\displaystyle\int_0^\ep \frac{1}{(a+r^2)^{d-1}}\, da\, dr.
$$
Making the change of variables $ r  =s \, \sqrt{a}$ in the inner integral of equation (\ref{eq:j1}) we have that 
$$ 
J_1(\ep) \le 2^d \displaystyle\int_0^\ep a^\frac{2-d}{2} \displaystyle\int_0^{\infty} \frac{s^{d-1}}{(1+s^2)^{d-1}}\,ds\, da.
$$
Since $2<d<4$ it follows that $J_1(\ep) = \mathcal{O}(\ep^\frac{4-d}{2})$.

\bigskip

\noindent
Moreover, since $\vf^2-1 \le C \, r^2$,  it follows that if $d<4$, then
$$ 
J_2(\ep) \le C \, \displaystyle\int_0^R q^d \, r^{d+1}\displaystyle\int_0^\ep \frac{1}{(a+r^2)^{d-1}}\, da\, dr \le 
C \,  2^d\, \ep \displaystyle \int_0^R r^{3-d}\, dr = \mathcal{O}(\ep).
$$
Thus, and since $2<d<4,$ it follows that $|J(\ep)-J(0)| = \mathcal{O}(\ep^\frac{4-d}{2})$.
\end{proof}

\bigskip

\begin{claim}\label{cl:tercera}
$$
\left(\omega_d \,  \displaystyle\int_0^R r^{d-1} q^d\ue^\frac{2d}{d-2}\, dr  \right)^\frac{d-2}{d}= 
\omega_d^\frac{d-2}{d}2^{d-2}\epsilon^\frac{2-d}{2}D_d^\frac{d-2}{d}+ \mathcal{O}(\ep^\frac{4-d}{2}),
$$
where 
$$
D_d = \frac{1}{2}\frac{\Gamma\left(\frac{d}{2}\right)^2}{\Gamma(d)}.
$$ 
\end{claim}

\begin{proof}
Let 
$$
K(\ep) =  \omega_d \, \displaystyle\int_0^R r^{d-1} q^d \ue^\frac{2d}{d-2}\, dr  = \omega_d \,  \displaystyle\int_0^R r^{d-1} q^d \frac{\vf^\frac{2d}{d-2}}{(\ep+r^2)^d}\, dr.
$$
Then, and since $ q^d \vf^\frac{2d}{d-2} = q^d(\vf^\frac{2d}{d-2}-1) + (q^d-2^d) + 2^d$,
we can write $K(\ep)=\omega_d \, ( K_1(\ep)+K_2(\ep)+K_3(\ep))$, where
$$
K_1(\ep) = \displaystyle\int_0^R \frac{q^d \, r^{d-1}}{(\ep+r^2)^d}(\vf^\frac{2d}{d-2}-1) \, dr;
$$
$$ 
K_2(\ep) =\displaystyle\int_0^R \frac{r^{d-1}(q^d-2^d)}{(\ep+r^2)^d} \, dr;
$$
and 
$$ 
K_3(\ep) = 2^d \, \displaystyle\int_0^\ep \frac{r^{d-1}}{(\ep+r^2)^d}\, dr.
$$

\bigskip

\noindent
Since $\vf(0)=1$ and $\vf'(0)=1$ it follows that $\vf^\frac{2d}{d-2}-1 \le C \, r^2.$ Thus, making the change of variables $r=s \, \sqrt{\ep},$ and since $d>2,$ it follows that
\begin{equation}\label{eq:K11}
K_1(\ep) \le C \,  2^d \, \displaystyle\int_0^R \frac{r^{d+1}}{(\ep+r^2)^d}\, dr \le C \,  2^d \, \ep^\frac{2-d}{2} \, 
\displaystyle\int_0^\infty \frac{s^{d+1}}{(1+s^2)^d}\, ds = \mathcal{O}(\ep^\frac{2-d}{2}).
\end{equation}
In order to obtain an estimate for $K_2(\ep)$, notice that if $0\le r \le R$,  then $0\le 2^d - q^d \le 2^d \,  A(R) \, r^2,$ where $A(R) = d(1+R^2)^{d-1}$.
Thus, 
$$
|K_2(\ep)| \le 2^d \,  A(R) \,  \displaystyle\int_0^R \frac{r^{d+1}}{(\ep+r^2)^d}\, dr.
$$ 
As before, we can make the change of variables $r = s \, \sqrt{\ep}$ to obtain 
\begin{equation}\label{eq:K2}
|K_2(\ep)| \le 2^d \, A(R) \ep^\frac{2-d}{2}\displaystyle\int_0^R \frac{s^{d+1}}{(1+s^2)^d}\, ds = \mathcal{O}(\ep^\frac{2-d}{2}).
 \end{equation} 
Finally, we will show that 
\begin{equation}\label{eq:K3}
K_3(\ep) = 2^d \, \ep^{-d/2} \, D_d + \mathcal{O}(1).
\end{equation}
In fact, making the change of variables $r=s \, \sqrt{\ep}$ we have that 
$$ 
K_3(\ep) = 2^d \ep^{-d/2} \left( \displaystyle\int_0^{\infty}\frac{s^{d-1}}{(1+s^2)^d}\, ds - \displaystyle\int_{\frac{R}{\sqrt{\ep}}}^\infty \frac{s^{d-1}}{(1+s^2)^d}\, ds\right). 
$$
But by equations (\ref{eq:casigama}) and (\ref{eq:errorcola}) it follows that 
$$ 
\displaystyle\int_0^{\infty}\frac{s^{d-1}}{(1+s^2)^d}\, ds = D_d,
$$
and 
$$
\displaystyle\int_{\frac{R}{\sqrt{\ep}}}^\infty \frac{s^{d-1}}{(1+s^2)^d}\, ds = \mathcal{O}(\ep^\frac{d}{2}).
$$
It follows from equations (\ref{eq:K11}), (\ref{eq:K2}) and (\ref{eq:K3}) that 
$$
K(\ep) = 2^d \,  \omega_d \, \ep^{-d/2} \, D_d + \mathcal{O}(\ep^{d/2}),$$ 
and so
$$ 
K(\ep)^\frac{d-2}{d} = \omega_d^\frac{d-2}{d}2^{d-2}\epsilon^\frac{2-d}{2} \, D_d^\frac{d-2}{d}+ \mathcal{O}(\ep^\frac{4-d}{2}).
$$
\end{proof}

\bigskip

\noindent
Recall that our goal is to show that if $\lambda >\frac{1}{4} [(2 \ell_2 +1)^2 - (d-1)^2]$,  then 
$Q_{\lambda}(\ue) < S$, where $S$ is the critical  Sobolev constant and $Q_{\lambda}$ is given by (\ref{eq:Qr}).

\bigskip 

\noindent
From the estimates obtained in Claim \ref{cl:primera}, Claim \ref{cl:segunda} and Claim \ref{cl:tercera} it follows that 
\begin{equation}
\begin{split}
& Q_{\lambda}(\ue)  = d(d-2)(\omega_d \, D_d)^\frac{2}{d} + \ep^\frac{d-2}{2} \, C_d \left[ \displaystyle\int_0^R r^{3-d}
\left(q^{d-2}\vf'^2 - (d-2)^2q^{d-1}\vf^2-\lambda q^d\vf^2 \right)\, dr \right] +\mathcal{O}(\ep),\\ 
\end{split}
\end{equation}
where $C_d = \omega_d^{2/d} \, 2^{2-d} \, D_d^\frac{2-d}{d}$. 
Notice that 
$$d(d-2)(\omega_d \, D_d)^\frac{2}{d}=\pi \,  d (d-2) \left( \frac{\Gamma\left(\frac{d}{2}\right)}{\Gamma(d)}\right)^\frac{2}{d},
$$
which is precisely the Sobolev critical constant $S$ (see, e.g., \cite{Ta76}, with $p=2,$ $m=d$ and $q= {2d}/{(d-2)}$). Now let 
$$
T(\vf) = \displaystyle\int_0^R r^{3-d}\left(q^{d-2}\vf'^2 - (d-2)^2 \, q^{d-1}\, \vf^2-\lambda q^d \, \vf^2 \right)\, dr.
$$
It suffices to show that $T(\vf)$ is negative if $\lambda >\frac{1}{4} [(2 \ell_2 +1)^2 - (d-1)^2]$.
 In order to conclude the proof we choose $\varphi=\varphi_1$, where $\varphi_1$ is the minimizer of 
$$
M(\varphi) = \displaystyle\int_0^R r^{3-d}\left(q^{d-2}\vf'^2 - (d-2)q^{d-1}\vf^2 \right)\, dr
$$
subject to the constraint
$$ 
\displaystyle\int_0^R r^{3-d} \,  q^d \, \vf^2 \, dr =1.
$$
The minimizer of $M(\varphi)$,  $\varphi_1,$ satisfies the Euler equation 
\begin{equation}\label{eq:conmu}
 -\frac{d}{dr}\left( r^{3-d} \, q^{d-2}\varphi_1'\right) - (d-2) \, r^{3-d} \, q^{d-1}\varphi_1 = \mu q^d \,  r^{3-d}\varphi_1.
\end{equation}
Multiplying (\ref{eq:conmu}) by $\varphi_1(r)$ and integrating between $0$ and $R$ we get, after integrating by parts, 
$$ 
\int_0^R r^{3-d} \, q^{d-2} \, \varphi_1'^2\, dr - (d-2) \int_0^R r^{3-d} \, q^{d-1} \, \varphi_1^2\, dr = \mu \int_0^R q^d \,  r^{3-d} \varphi_1^2\, dr.
$$ 
Thus, since 
$$\int_0^R q^d \,  r^{3-d}\varphi_1^2\, dr=1,
$$ 
$M(\varphi_1) = \mu$; hence,
$$ T(\varphi_1) = M(\varphi_1) - \lambda  = \mu-\lambda < 0
$$
if $\lambda>\mu$.  It suffices to show that $ \mu = \frac{1}{4}\left[ (2\ell_2+1)^2 - (d-1)^2\right],$ where $\ell_2$ is the first positive value for which the associated Legendre function $P_{\ell}^\frac{(d-2)}{2}(\cos\theta_1)$ vanishes.  Changing coordinates (setting $r= \tan\theta/2,$ so that $q=2\cos^2 \theta/2$) and letting
$$ 
\varphi_1(\t) = \sin^b\left(\frac{\t}{2}\right)\sin^a\left(\t\right)v(\theta),
$$
where $b= 2d-4$ and $a = \frac{1}{2}(6-3d)$ we obtain the equation for $v$ 
\begin{equation}\label{eq:legendreex}
 \ddot v(\t) + \cot\t \,\dot v(\t) + \left(\mu + \frac{d(d-2)}{4} - \frac{(d-2)^2}{4\sin^2\t}\right)v = 0, 
 \end{equation}
 with boundary condition $v(\t_1) = 0$. 

\begin{remark}
Equation (45)  is the same equation that determines the first Dirichlet eigenvalue of the original problem (i.e., equation (\ref{eq:a3})). 
We choose $a$ and $b$ precisely so that these two equations coincide. 
\end{remark}

The solutions of equation (\ref{eq:legendreex}) are $P_{\ell}^{\alpha}$ and $P_{\ell}^{-\alpha}$,  where $\alpha = \frac{2-d}{2}$ and 
$\ell(\ell+1) =  \mu + \frac{d(d-2)}{4}$. That is, $\ell = \frac{1}{2} \left(\sqrt{1+4 \mu - 4 \alpha + 4 \alpha^2} \, -1\right)$, and so
$$ \mu = \frac{1}{4}\left[(2\ell+1)^2 - (d-1)^2 \right].
$$
It follows that $\vf_1$ is of the form 
$$
{\vf_1} =  \sin^{b}\left(\frac{\t}{2}\right)\, \sin^{a}\t (A \, P_{\ell}^{\alpha} + B \,  P_{\ell}^{-\alpha}),
$$ 
where the choice of $A$ and $B$ must ensure the regularity of the solution. Notice that from the definition of $a$ and $b$ we have that $a+b=(d-2)/2$. 
Moreover, $\alpha = (2-d)/2$. Since $2<d$, we see that in order to have regular solutions at the origin we have to choose $A=0$. 
Finally, to satisfy the boundary condition $u(\theta_1)=0$  we must choose $\ell = \ell_2$. which finishes the proof of the lemma. Notice that, by Lemma \ref{lemma2.1},
$\ell_2< \ell_1$.
\end{proof}

\begin{remark} 
It is important to notice that what we actually present in the previous proof is the fact that  
for every $2<d<4$ and $\lambda > \frac{1}{4} [(2 \ell_2 +1)^2 - (d-1)^2]$ there is a minimizer of $Q_\lambda(u)$ (see equation (\ref{eq:Qr})). 
Given the invariance of $Q_{\lambda}(u)$ under the transformation $u \to \beta u$ (for any positive constant $\beta$), in order to get the Euler equation we minimize the numerator of (\ref{eq:Qr}) subject to the constraint $ \omega_d \,  \int_0^R  u^{p+1} \, q^d \, r^{d-1} \, dr=1$. The corresponding Euler equation is given by, 
\begin{equation}
- r^{d-1} (q^{d-2} u')'  - \lambda r^{d-1} \, q^d \, u = \mu  \,  r^{d-1} \, q^d \, u^p,
\label{eq:rmk1}
\end{equation}
where $\mu$ is a Lagrange multiplier. Multiplying (\ref{eq:rmk1}) by $\omega_d \, r^{d-1} \, u$, integrating in $r$ from $0$ to $R$, using the constraint and the characterization of 
$\lambda_1$ (i.e., the first Dirichlet eigenvalue), we have that, 
\begin{equation}
\mu \ge (\lambda_1-\lambda) \,  \omega_d \, \int_0^R  q^d \, u^2 \, r^{d-1} \, dr > 0, 
\label{eq:rmk2}
\end{equation}
provided $\lambda < \lambda_1$. In this case, if we set $u = \mu^{-1/(p-1)} \, v$, we finally see that $v$ solves 
\begin{equation}
- r^{d-1} (q^{d-2} u')'  - \lambda r^{d-1} \, q^d \, u = r^{d-1} \, q^d \, u^p.
\label{eq:rmk3}
\end{equation}
Going back to geodesic coordinates, i.e., $r \to \theta$, with $r= \tan \theta/2$, (\ref{eq:rmk3}) becomes (\ref{eq:ODE1}). 
From here it follows that  for any $2<d<4$, provided 
$$
\frac{1}{4} [(2 \ell_2 +1)^2 - (d-1)^2] < \lambda < \frac{1}{4} [(2 \ell_1 +1)^2 - (d-1)^2], 
$$
we have  the existence of a unique positive solution of (\ref{eq:ODE1}). 
\end{remark}

\bigskip
\bigskip

\section{Nonexistence of solutions} \label{sec:noexistencia}

\noindent
In this section we use a Rellich--Pohozaev  \cite{Po65,Re40} type argument to prove the nonexistence of regular positive solutions of the Boundary Value Problem
\begin{equation}
-u''-(d-1) \cot \theta \, u' = u^p + \lambda \, u
\label{eq:4.1}
\end{equation}
in the interval $(0,\theta_1)$, with boundary conditions $u'(0)=0$, $u(\theta_1)=0$ for a sharp range of values of $\lambda$. Here $2<d<4$ and $p=(d+2)/(d-2)$ is 
the critical Sobolev exponent. Here we will distinguish two separate sets of values of the parameters $\lambda$ and $\theta_1$:

\bigskip
\noindent
{\bf First Case:}  $\lambda > -d(d-2)/4$ and $0 \le \theta_1 \le \pi$. 

\bigskip
\noindent
{\bf Second Case:} $\lambda \le - d(d-2)/4$ and $0 \le \theta_1 \le \pi/2$. 

\noindent
\begin{remark} There is still a further case, namely $\lambda \le - d(d-2)/4$ and $\pi/2 < \theta_1 \le \pi$. In dimension $d=3$  the study of this case was initiated 
by Bandle and Benguria  \cite{BaBe02}, who showed numerically that there is a curve (denoted by  $\nu(\theta_1)$ in  \cite{BaBe02}) such that for $\lambda > \nu(\theta_1)$ 
there are no positive radial solutions of (\ref{eq:4.1}). On the other hand, in dimension $d=3$ for values of $\lambda$ below $\nu (\theta_1)$ there is a rich family of solutions.
These solutions were extensively studied in \cite{BaWe07, BaWe08, BrPe04, BrPe06}. For the whole interval $2<d<4$, we will explore the existence, nonexistence and multiplicity 
of positive solutions of (\ref{eq:4.1}) in a further publication. 
\end{remark}

\bigskip
\noindent
Our main results in this section are  the Lemmas \ref{Case1} and \ref{Case2} below. 

\begin{lemma} \label{Case1}
Assume $\lambda > -d(d-2)/4$ and $0 \le \theta_1 \le \pi$. Let $\ell_2$ be the first positive value of $\ell$ for which the associated Legendre function ${\rm P}_{\ell}^{(d-2)/2} (\cos\theta_1)$ vanishes. Then if
$$
\lambda \le \frac{1}{4} [(2 \ell_2 +1)^2 - (d-1)^2],
$$
there are no positive solutions of
\begin{equation}
-\frac{(\sin^{d-1}\t \,u')'}{\sin^{d-1}\t} = u^p +\lambda u,
\label{eq:4.2}
\end{equation} 
with boundary conditions $u'(0)=0$, and $u(\t_1) = 0$.
\end{lemma}
\begin{remark}
Notice that we have recast equation (\ref{eq:4.1}) in the form (\ref{eq:4.2}) which is more suitable in our proof. 
\end{remark}
\begin{proof}
Multiplying equation (\ref{eq:4.2}) by $g(\t)u'(\t) \sin^{2d-2}\t$, where $g(\theta)$ is a sufficiently smooth, nonnegative function defined in the interval $(0,\theta_1)$ 
satisfying the boundary conditions $g(0)=g'(0)=0$,  we obtain 
$$
 -\displaystyle\int_0^{\t_1} (\sin^{d-1}\t  \, u')'\,  u' \, g \, \sin^{d-1}\t \, d\t  =
  \displaystyle\int_0^{\t_1} \left( \frac{u^{p+1}}{p+1}\right)' \, g \, \sin^{2d-2} \t \, d\t + 
  \lambda \, \displaystyle\int_0^{\t_1} \left( \frac{u^2}{2}\right)'  \, g \, \sin^{2d-2}\t\,d\t.
$$
Integrating all the terms by parts, using the boundary conditions, we have that 
\begin{equation}
\begin{split} \label{eq:eng}
&\displaystyle\int_0^{\t_1} u'^2 \left( \frac{g'}{2}\sin^{2d-2}\t\right) \, d\t + \displaystyle\int_0^{\t_1} \frac{u^{p+1}}{p+1} \left( g'\sin^{2d-2}\t + g(2d-2) \sin^{2d-3}\t\cos\t\right) \,d\t\\
 &\,+ \lambda \displaystyle\int_0^{\t_1} \frac{u^2}{2} \left( g'\sin^{2d-2}\t + g(2d-2) \sin^{2d-3}\t\cos\t\right)\,d\t = \frac{1}{2} \sin^{2d-2}\t_1 u'(\t_1)^2g(\t_1).\\
\end{split}
\end{equation}

On the other hand, setting $h = \frac{1}{2} g' \, \sin^{d-1}\t $ and multiplying equation (\ref{eq:4.2}) by $h(\t) \, u(\t)\sin^{d-1}(\t)$ we obtain 
$$ 
-\displaystyle\int_0^{\t_1} (\sin^{d-1}\t u')' hu\,d\t = \displaystyle\int_0^{t_1} hu^{p+1}\sin^{d-1}\t\,d\t +\lambda \displaystyle\int_0^{\t_1}hu^2 \sin^{d-1}\t\,d\t.
$$ 
Integrating by parts we obtain 
\begin{equation}
\begin{split}\label{eq:enh}
& \displaystyle\int_0^{\t_1} u'^2h\sin^{d-1}\t \, d\t = \displaystyle\int_0^{\t_1}u^{p+1}h\sin^{d-1}\t\,d\t \\
&\, + \displaystyle\int_0^{\t_1} u^2 \left( \lambda h \sin^{d-1}\t +\frac{1}{2}h''\sin^{d-1}\t + \frac{1}{2}h'(d-1)\sin^{d-2}\t\cos\t\right)\,d\t. \\
\end{split}
\end{equation}
Notice that by our choice of $h$, the coefficient of $u'^2$ in equation (\ref{eq:eng}) is the same as the coefficient of $u'^2$ in equation (\ref{eq:enh}). Finally, subtracting equation (\ref{eq:eng}) from equation (\ref{eq:enh}) we obtain 

\begin{equation}\label{eq:poho}
\frac{1}{2}\sin^{2d-2}\t_1 u'(\t_1)^2g(\t_1) = \displaystyle\int_0^{\t_1} B\,u^{p+1}\, d\t + \displaystyle\int_0^{\t_1} A\, u^2\, d\t, 
\end{equation}
where 
\begin{equation}\label{eq:A}\begin{split}
A \equiv &\,\lambda \left(h\sin^{d-1}\t + \frac{1}{2} g' \sin^{2d-2}\t + g(d-1) \sin^{2d-3}\t\cos\t\right)\\
&\,  + \frac{1}{2}h''\sin^{d-1}\t + \frac{1}{2}h'(d-1) \sin^{d-2}\t\cos\t, \\
\end{split}
\end{equation}
and 
\begin{equation}\label{eq:B}
\begin{split}
B \equiv h \, \sin^{d-1}\t +\frac{g' \, \sin^{2d-2}\t }{p+1} + \frac{(2d-2) \, g\sin^{2d-3}\t\cos\t}{p+1} .
\end{split}
\end{equation}
Since by hypothesis $g(\t_1)\ge 0,$ it follows that the left hand side of equation (\ref{eq:poho}) is nonnegative. In the sequel (see the Claim \ref{cl:mm} and the Lemma 
\ref{LM} below), 
we show that for any
$$
\lambda \le \frac{1}{4} [(2 \ell_2 +1)^2 - (d-1)^2],
$$
there exists a choice of $g$ so that
$A \equiv 0$,  and $B$ is negative. That is, we will show that for that range of $\lambda$'s the right hand side of equation (\ref{eq:poho}) is negative, thus obtaining a contradiction. 
\end{proof}

\bigskip 

\noindent
Substituting $h  = \frac{1}{2} g' \, \sin^{d-1}\t $ in equation (\ref{eq:A}) we obtain
\begin{equation}\label{eq:A2}\begin{split}
A =&\, \sin^{2d-2}\t \left[ \frac{g'''}{4} + \frac{3}{4}g''(d-1)\cot\t  \right.\\
&\,+ \left. g' \left( \frac{(d-1)(2d-3)\cot^2\t}{4} - \frac{d-1}{4}+\lambda\right) +\lambda g(d-1)\cot\t\right].  \\
\end{split}
\end{equation}
Finally, making the change of variables $g= f/\sin^2\t$ we obtain
\begin{equation}\label{eq:A3}\begin{split}
A = \sin^{2d-4}\t &\, \left[ \frac{f'''}{4} +\frac{3}{4}(d-3)\cot\t f'' + f' \left( \frac{(d-3)(2d-11)}{4}\cot^2\t +\frac{7-d}{4} +\lambda \right) \right.\\
 &\,\,\,\,  \left. + f \left( (d-3)(4-d) \cot^3\t + 2(d-3)\cot \t +\lambda (d-3)\cot\t\right) \right]. \\
\end{split}
\end{equation}

\begin{claim}\label{cl:mm}
For any $2<d<4$, the function  
 $$ 
 z(\t) = \sin^{4-d}\t P_\ell^\alpha(\cos\t) P_\ell^{-\alpha}(\cos\t),
 $$ 
 with $\alpha = (2-d)/2$ and $ \ell = \frac{1}{2}\left( \sqrt{4\lambda + (d-1)^2}-1\right)$,
is a solution of
\begin{equation}\label{eq:noex1}
\begin{split}
& \frac{f'''}{4} +\frac{3}{4}(d-3)\cot\t f'' + f' \left( \frac{(d-3)(2d-11)}{4}\cot^2\t +\frac{7-d}{4} 
+ \lambda \right)\\ +&\, f \left( (d-3)(4-d) \cot^3\t + 2(d-3)\cot \t +\lambda (d-3)\cot\t\right) = 0. \\ 
\end{split}
\end{equation}
 \end{claim}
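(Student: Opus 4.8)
The plan is to recognize $(\ref{eq:noex1})$ as, up to the explicit weight $\sin^{4-n}\theta$, the \emph{symmetric square} of the associated Legendre equation -- the third-order linear ODE satisfied by every pairwise product of solutions of a fixed second-order equation. Concretely, I would first produce a clean third-order ODE annihilating $P_\ell^{\alpha}(\cos\theta)\,P_\ell^{-\alpha}(\cos\theta)$, then conjugate it by the weight $\sin^{4-n}\theta$ and match it with $(\ref{eq:noex1})$ (up to the overall factor $\tfrac14$) term by term. Since multiplying a linear ODE by a nonzero constant does not change it, this reduces the claim to a finite computation.

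First I would record that $w_1:=P_\ell^{\alpha}(\cos\theta)$ and $w_2:=P_\ell^{-\alpha}(\cos\theta)$ \emph{both} solve the associated Legendre equation
\[
w''+\cot\theta\,w'+\Bigl(\ell(\ell+1)-\frac{\alpha^2}{\sin^2\theta}\Bigr)w=0 ,
\]
because this equation involves the order only through its square; this is exactly the fact used in $(\ref{eq:dey1})$--$(\ref{eq:dey2})$. For the stated value $\ell=\tfrac12\bigl(\sqrt{4\lambda+(n-1)^2}-1\bigr)$ and $\alpha=(2-n)/2$ one has $\ell(\ell+1)=\lambda+\tfrac{n(n-2)}{4}=\lambda+\alpha(\alpha-1)$, which is relation $(\ref{eq:a10})$.

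Second, I would derive the symmetric-square identity: if $w''+p\,w'+q\,w=0$ and $u:=w_1w_2$, then differentiating $u=w_1w_2$ twice and using the equation gives $w_1'w_2'=\tfrac12\bigl(u''+p\,u'+2q\,u\bigr)$; differentiating once more and again eliminating the second derivatives of $w_1,w_2$ yields
\[
u'''+3p\,u''+\bigl(2p^2+p'+4q\bigr)u'+\bigl(4pq+2q'\bigr)u=0 .
\]
Specializing to $p=\cot\theta$ and $q=\ell(\ell+1)-\alpha^2\csc^2\theta$ shows that $u=P_\ell^{\alpha}(\cos\theta)\,P_\ell^{-\alpha}(\cos\theta)$ solves this third-order equation. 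I would then substitute $u=\sin^{n-4}\theta\,f$ (so that $f=z$), factor out $\sin^{n-4}\theta$, collect the coefficients of $f''',f'',f',f$, and simplify using $\csc^2\theta=1+\cot^2\theta$ together with $\alpha=(2-n)/2$ (hence $\alpha^2=(n-2)^2/4$ and $\ell(\ell+1)=\lambda+n(n-2)/4$); the resulting coefficients should be precisely those in $(\ref{eq:noex1})$, e.g. $\tfrac34(n-3)\cot\theta$ in front of $f''$ and $(n-3)(4-n)\cot^3\theta+(2+\lambda)(n-3)\cot\theta$ in front of $f$.

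As an independent check one can run the same symmetric-square construction on the two regular solutions $\sin^{\alpha}\theta\,P_\ell^{\pm\alpha}(\cos\theta)$ of the linear eigenvalue problem at the value $\lambda$ (cf. $(\ref{DirEF})$): their product is $g:=\sin^{2\alpha}\theta\,P_\ell^{\alpha}P_\ell^{-\alpha}=\sin^{2-n}\theta\,P_\ell^{\alpha}P_\ell^{-\alpha}$, which is exactly $f/\sin^2\theta$ for $f=z$, and by the reductions $(\ref{eq:A})$--$(\ref{eq:A3})$ the equation $A\equiv0$ written through $g=f/\sin^2\theta$ is $(\ref{eq:noex1})$. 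Either route, the only genuine obstacle is bookkeeping: the weight substitution $u=\sin^{n-4}\theta\,f$ in a third-order operator produces many terms mixing powers of $\sin\theta,\cos\theta$ with the non-polynomial piece $\alpha^2\csc^2\theta$, and one must verify that all $\csc^2\theta$ contributions recombine into the stated polynomials in $n$ and $\cot\theta$. Keeping $\alpha$ symbolic until the very end and applying $\csc^2\theta=1+\cot^2\theta$ systematically is what makes these cancellations transparent.
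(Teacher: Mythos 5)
Your proposal is correct and follows essentially the same route as the paper: both form the product $v=P_\ell^{\alpha}(\cos\theta)P_\ell^{-\alpha}(\cos\theta)$, derive the third-order equation it satisfies (your general symmetric-square formula with $p=\cot\theta$, $q=\ell(\ell+1)-\alpha^2\csc^2\theta$ specializes exactly to the paper's equation (\ref{eq:nex3})), and then pass to $f=\sin^{4-n}\theta\,v$ and match coefficients using $\ell(\ell+1)=\lambda+n(n-2)/4$. The only difference is presentational: you package the elimination step as a generic symmetric-square identity, while the paper carries out the same elimination directly for the Legendre case.
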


\begin{proof}
Let $y_1(\t) = P_\ell^{\alpha}(\cos\t)$ and $y_2(\t) = P_\ell^{-\alpha}(\cos\t).$ Then $y_1$ and $y_2$ are solutions to 
\begin{equation}\label{eq:legendre}
y''(\t) + \cot\t  \, y' (\t)+ k(\t)y(\t) = 0,
\end{equation}
where 
\begin{equation}
k(\t) = \ell(\ell+1)-\frac{\alpha^2}{\sin^2\t}.
\label{eq:4.3}
\end{equation} 
Let $v(\t) = y_1(\t) \, y_2(\t)$. Then, it follows from (\ref{eq:legendre}) that
$$
y_1''y_2 + y_2'' y_1 = -\cot\t v' -2kv,
$$ 
which in turn implies
$$ 
v'' = -2kv -\cot \t \,  v' + 2y_1'y_2'.
$$
Similarly, and since 
$$ 
y_1''y_2' +y_1'y_2'' = -2\cot\t \, y_1'y_2' -kv',
$$
we obtain 
\begin{equation}\label{eq:nex3}
 v''' +3\cot \t \,  v'' + v' \left( 4k - \csc^2\t+2\cot^2\t\right) +4v\left( \alpha^2\cot\t\csc^2\t+k\cot\t\right) =0.
 \end{equation}
Now, we make the change of variables $v \to f$ given by
$$ 
f(\t) = \sin^{4-d}\t \,v(\t)
$$
in equation (\ref{eq:nex3}) and multiply the resulting equation through by $\sin^{d-4}\t$. 
Setting $\alpha=(2-d)/2$,  $ \ell = \frac{1}{2}\left( \sqrt{4\lambda + (d-1)^2}-1\right)$ (which is the positive solution of $4\ell(\ell+1)=4 \lambda + d^2-2d$) and, using
(\ref{eq:4.3}), we see that $f$ satisfies (\ref{eq:noex1}). 
This finishes the proof of Claim \ref{cl:mm}.
\end{proof}

\begin{remark} Notice that in order to ensure that $\ell = \left( \sqrt{4\lambda + (d-1)^2}-1\right)/2$ is positive, we need to have that $4 \lambda + d(d-2) >0$, 
which we have assumed in the hypothesis of Lemma \ref{Case1}. 
\end{remark}

\begin{lemma}\label{LM}
Let $\alpha = (2-d)/2$,  $ \ell = \frac{1}{2}\left( \sqrt{4\lambda + (d-1)^2}-1\right),$ and $\ell_2$ be the first positive value of $\ell$ for which 
 $P_\ell^{\alpha}(cos \theta_1)$ vanishes.  Consider
 \begin{equation}
B \equiv h\sin^{d-1}\t +\frac{g' \sin^{2d-2}\t }{p+1} + \frac{(2d-2)g\sin^{2d-3}\t\cos\t}{p+1} ,
\label{eq:4.4}
\end{equation}
where $h(\t) = \frac{1}{2} g' \sin^{d-1}\t$,  $g(\t)= f(\t)\sin^{-2}\t$ and $f(\t)= \sin^{4-d}\t P_\ell^\alpha(\cos\t)P_\ell^{-\alpha}(\cos\t)$. Then $B$ is negative on $[0,\ell_2)$. 
\end{lemma}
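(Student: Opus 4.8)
The plan is to collapse $B$ into a single term and then reduce the lemma to a strict monotonicity statement for a product of Legendre functions, proved by a first–crossing argument. Set $w(\t):=P_\ell^{\alpha}(\cos\t)\,P_\ell^{-\alpha}(\cos\t)$. From $f(\t)=\sin^{4-n}\t\,w(\t)$ and $g=f/\sin^2\t$ one gets $g(\t)=\sin^{2-n}\t\,w(\t)$, so $g\,\sin^{n-2}\t=w$, and the first term in (\ref{eq:4.4}) is $h\sin^{n-1}\t=\tfrac12 g'\sin^{2n-2}\t$. Since $p+1=\tfrac{2n}{n-2}$ we have $\tfrac12+\tfrac1{p+1}=\tfrac{n-1}{n}$ and $\tfrac{2n-2}{p+1}=\tfrac{(n-1)(n-2)}{n}$, hence
$$
B=\frac{n-1}{n}\,g'\sin^{2n-2}\t+\frac{(n-1)(n-2)}{n}\,g\sin^{2n-3}\t\cos\t
 =\frac{n-1}{n}\,\sin^{2n-3}\t\,\bigl(g'\sin\t+(n-2)g\cos\t\bigr).
$$
Because $g'\sin\t+(n-2)g\cos\t=\sin^{3-n}\t\,\tfrac{d}{d\t}\!\bigl(g\sin^{n-2}\t\bigr)=\sin^{3-n}\t\,\dot w$, this simplifies to $B=\tfrac{n-1}{n}\,\sin^{n}\t\;\dot w(\t)$. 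As $\tfrac{n-1}{n}\sin^{n}\t>0$, it suffices to show $\dot w<0$ on $(0,\t_1)$ (the interval where a would-be solution is positive). Throughout I use $\ell>0$, equivalently $\lambda>n(2-n)/4$, which covers the relevant range $0<\lambda\le\tfrac14[(2\ell_2+1)^2-(n-1)^2]$; for $\lambda\le0$ nonexistence is classical.

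Next I record the inputs near $\t=0$. Writing $y_1=P_\ell^{\alpha}(\cos\t)$, $y_2=P_\ell^{-\alpha}(\cos\t)$ (both solving $\ddot y+\cot\t\,\dot y+k\,y=0$ with $k=\ell(\ell+1)-\alpha^2/\sin^2\t$), the factors $(\cot\tfrac\t2)^{\pm\alpha}$ cancel in the product, so by (\ref{Taylor})--(\ref{hyper}) $w$ is analytic at $\t=0$ with $w(0)=c_0:=\Gamma(1-\alpha)^{-1}\Gamma(1+\alpha)^{-1}=\tfrac{\sin\pi\alpha}{\pi\alpha}>0$, and expanding the two hypergeometric series gives $w(\t)=c_0\bigl[1-\tfrac{\ell(\ell+1)}{2(1-\alpha^2)}\t^2+O(\t^4)\bigr]$; hence, with $M(\t):=\sin\t\,\dot w(\t)$, $M(0)=0$ and $M(\t)=-\tfrac{c_0\,\ell(\ell+1)}{1-\alpha^2}\t^2+O(\t^4)<0$ for small $\t>0$. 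Also $\tfrac{d}{d\t}\bigl(\sin\t(\dot y_1y_2-\dot y_2y_1)\bigr)=0$, so $\sin\t(\dot y_1y_2-\dot y_2y_1)$ equals the constant $C_0:=\tfrac2\pi\sin\tfrac{\pi(n-2)}{2}>0$ computed in (\ref{boundarywronskian}); since $n-2=-2\alpha$ a short check gives $C_0=(n-2)c_0$, so $C_0^2=4\alpha^2c_0^2$. Finally $w>0$ on $(0,\t_1)$ because $\ell\le\ell_2<\ell_1$.

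Differentiating $M=\sin\t\,\dot w$ and using $\ddot w=-2kw-\cot\t\,\dot w+2\dot y_1\dot y_2$ (from the two Legendre equations, as in the derivation of (\ref{eq:nex3})) gives $\dot M=2\sin\t\,(\dot y_1\dot y_2-kw)$; from $\dot y_1y_2+y_1\dot y_2=\dot w=M/\sin\t$ and $\dot y_1y_2-y_1\dot y_2=C_0/\sin\t$ we obtain $\dot y_1\dot y_2=\tfrac{M^2-C_0^2}{4\sin^2\t\,w}$, so
$$
\dot M=\frac{M^2-C_0^2}{2\sin\t\,w}-2k\,\sin\t\,w .
$$
Suppose $M$ vanished somewhere in $(0,\t_1)$ and let $\t_*$ be the first such point. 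Then $M<0$ on $(0,\t_*)$, so $\dot w<0$ there, whence $w$ is strictly decreasing on $[0,\t_*]$ and $0<w(\t_*)<w(0)=c_0$; and since $M$ crosses $0$ from below, $\dot M(\t_*)\ge0$. Putting $M(\t_*)=0$ in the displayed equation and multiplying by $2\sin\t_*\,w(\t_*)>0$ gives $4k(\t_*)\sin^2\t_*\,w(\t_*)^2\le-C_0^2$, i.e.
$$
-k(\t_*)\sin^2\t_*\ \ge\ \frac{C_0^2}{4\,w(\t_*)^2}\ =\ \frac{\alpha^2c_0^2}{w(\t_*)^2}\ >\ \alpha^2 ,
$$
using $0<w(\t_*)<c_0$. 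But $-k(\t_*)\sin^2\t_*=\alpha^2-\ell(\ell+1)\sin^2\t_*\le\alpha^2$ because $\ell>0$ — a contradiction. Hence $M<0$ on all of $(0,\t_1)$, so $\dot w<0$ and $B<0$ there.

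The one genuinely delicate point is that the last comparison is \emph{sharp}: it goes through only because the identity $C_0=(n-2)c_0$ turns $\tfrac{C_0^2}{4w^2}>\alpha^2$ into exactly the strict inequality $w<c_0$, which is precisely what monotone decrease of $w$ up to $\t_*$ supplies. I expect the real work to be verifying this identity together with the $O(\t^2)$ expansion of $w$ (both via (\ref{Taylor})--(\ref{hyper})), plus the bookkeeping in the reduction $B=\tfrac{n-1}{n}\sin^{n}\t\,\dot w$; the Sturm-type crossing argument is then immediate.
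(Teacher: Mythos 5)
Your proof is correct, but it takes a genuinely different route from the paper's. Both arguments begin with the same bookkeeping, reducing (\ref{eq:4.4}) to $B=\tfrac{n-1}{n}\,\sin^{n}\theta\,\tfrac{d}{d\theta}\bigl(P_\ell^{\alpha}(\cos\theta)\,P_\ell^{-\alpha}(\cos\theta)\bigr)$ (the paper states this as (\ref{eq:bnuevo}) using $\tfrac{d}{d\theta}P_\ell^{\nu}(\cos\theta)=-\sin\theta\,\dot P_\ell^{\nu}(\cos\theta)$), but they diverge afterwards. The paper passes to the sum of logarithmic derivatives $\dot P_\ell^{\alpha}/P_\ell^{\alpha}+\dot P_\ell^{-\alpha}/P_\ell^{-\alpha}$, rewrites each summand via the raising relation (\ref{eq:subida}) as a function $y_\nu$ of the ratio $P_\ell^{\nu+1}/P_\ell^{\nu}$, computes $\lim_{\theta\to 0}y_\nu=\ell(\ell+1)/(2(1-\nu))>0$ from the hypergeometric expansion, and derives the Riccati equation (\ref{eq:riccati}), whose inhomogeneous term $\ell(\ell+1)/\sin\theta>0$ rules out a first downward crossing of zero; this is carried out separately for $\nu=\alpha$ and $\nu=-\alpha$. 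You instead keep the product $w=P_\ell^{\alpha}P_\ell^{-\alpha}$ intact, use the constancy of $\sin\theta$ times the Wronskian to close a first-order equation for $M=\sin\theta\,\dot w$, and run the first-crossing argument on $M$ itself, the sharpness resting on the identity $C_0=(n-2)c_0$ combined with the strict decrease of $w$ up to the putative crossing point, which your own argument supplies. I checked the reduction of $B$, the expansion $w=c_0[1-\tfrac{\ell(\ell+1)}{2(1-\alpha^2)}\theta^2+O(\theta^4)]$, the equation $\dot M=\tfrac{M^2-C_0^2}{2\sin\theta\,w}-2k\sin\theta\,w$, and the identity $C_0^2=4\alpha^2c_0^2$; all are correct. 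Your route avoids the raising/lowering recurrences entirely and treats both orders at once, at the price of needing the exact constants $w(0)=c_0$ and $C_0$; the paper's route is more modular (positivity of $y_\nu$ for every $-1<\nu<1$ is of independent interest) but needs the recurrences. Two small caveats: (i) both your argument and the paper's implicitly require $\ell>0$, i.e.\ $\lambda>-n(n-2)/4$ (for the initial negativity of $M$, resp.\ positivity of $y_\nu$ at $0$, and for the final comparison); you state this openly while the paper leaves it tacit; (ii) your crossing argument divides by $w$, so strictly it yields $\dot w<0$ only on the interval where $w>0$ --- but that interval is exactly $[0,\mu)$, which is all the lemma asserts, and your side remark that $w>0$ up to $\theta_1$ when $\ell\le\ell_2$ is only needed for the application, not for the statement.
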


\begin{proof}
The associated Legendre functions satisfy the following raising and lowering relations (see, e.g., \cite{AbSt72}, equation 8.1.2, pp. 332), 
which we will use repeatedly in the proof of this lemma:
\begin{equation}\label{eq:subida}
\dot P_\ell^{\alpha}(\cos\t) = \frac{-P_\ell^{\alpha+1}}{\sin\t} - \frac{\alpha\cos\t P_\ell^\alpha}{\sin^2\t},
\end{equation}
and 
\begin{equation}\label{eq:bajada}
\dot P_\ell^{\alpha+1}(\cos\t)   =\frac{1}{\sin^2\t}\left( (\ell+\alpha+1)(\ell-\alpha)\sin\t P_\ell^\alpha +(\alpha+1)\cos\t P_\ell^{\alpha+1}\right).
\end{equation}
Notice that in the two previous equations, $\dot P_\ell^{\alpha}$ means the derivative of $P_\ell^{\alpha}$ with respect to its argument, therefore, 
$$
\frac{d}{d\theta} P_\ell^{\alpha}(\cos \theta) = - \sin \theta \, \dot P_\ell^{\alpha}(\cos \theta).
$$
After substituting for $h, g$ and $f$ we can write
\begin{equation}\label{eq:bnuevo}
B =  - \left( \frac{d-1}{d} \right) \sin^{d+1}\t  \, (\dot P_\ell^{\alpha} P_\ell^{-\alpha}+ P_\ell^\alpha\dot P_\ell^{-\alpha}). 
\end{equation} 
Hence, it suffices to show that $\dot P_\ell^{\alpha} P_\ell^{-\alpha}+ P_\ell^\alpha\dot P_\ell^{-\alpha} > 0$ on $[0,\ell_2)$.
Because of Lemma 2.1, $P_\ell^{\alpha}P_\ell^{-\alpha}$ is positive, on this interval. 
Thus, we can write this inequality as
\begin{equation}
\frac{\dot P_\ell^{\alpha}}{P_\ell^\alpha} + \frac{\dot P_\ell^{-\alpha}}{ P_\ell^{-\alpha}} > 0.
\label{eq:4.5}
\end{equation}
It follows from equation (\ref{eq:subida}) that 
\begin{equation}
\frac{\dot P_\ell^{\alpha}}{P_\ell^\alpha} + \frac{\dot P_\ell^{-\alpha}}{ P_\ell^{-\alpha}} =-\frac{1}{\sin\t}\frac{P_\ell^{\alpha+1}}{P_\ell^\alpha} - \frac{1}{\sin\t}\frac{P_\ell^{-\alpha+1}}{P_\ell^{-\alpha}}.
\label{eq:4.6}
\end{equation}
Given, the identity (\ref{eq:4.6}) above, in order to prove (\ref{eq:4.5}) it is convenient to introduce the function,
\begin{equation}\label{eq:defdey}
y_{\nu}(\t) = \frac{-1}{\sin\t}\frac{P_\ell^{\nu+1}(\cos\t)}{P_\ell^{\nu}(\cos\t)} - \frac{\nu}{2\sin^2\frac{\t}{2}}.
\end{equation}
In the sequel, we  study the behavior of $y_{\nu}(\t)$ on $[0,\ell_2)$.
 In particular, we will show that $y_{\nu}$ is positive on this interval if $-1<\nu<1.$ This in turn will imply that  
$$
\frac{\dot P_\ell^{\alpha}}{P_\ell^\alpha} + \frac{\dot P_\ell^{-\alpha}}{ P_\ell^{-\alpha}} = y_{\alpha}(\t) + y_{-\alpha}(\t)>0.
$$

\bigskip 

\bigskip 

\noindent
Using the series expansion of the Associated Legendre  functions given by (\ref{Taylor}) in terms of the Hypergeometric Function (\ref{hyper}) we readily get, 
$$ 
P_\ell^\nu (\cos\t) = \frac{1}{\Gamma(1-\nu)}\cot^\nu\frac{\t}{2}\left( 1 - \frac{\ell(\ell+1)}{1-\nu}\sin^2\frac{\t}{2}+ \frac{\ell(\ell^2-1)(\ell+2)}{2(1-\nu)(2-\nu)}\sin^4\frac{\t}{2} +\mathcal{O}\left( \sin^6\frac{\t}{2}\right) \right) .
$$ 
It follows that 
\begin{equation}\label{eq:cuocientep}
 \frac{P_\ell^{\nu+1}(\cos\t)}{P_\ell^\nu(\cos\t)}= \frac{\Gamma(1-\nu)}{\Gamma(-\nu)}\cot\frac{\t}{2}\left(  1 + E\sin^2\frac{\t}{2} +\mathcal{O}\left( \sin^4\frac{\t}{2}\right)\right),
\end{equation}
where 
$$ 
E  = \frac{\ell(\ell+1)}{\nu(1-\nu)}
$$
(here we used that $\Gamma(1-\nu) = - \nu \Gamma(-\nu)$). 
Thus, it follows from equations (\ref{eq:defdey}) and (\ref{eq:cuocientep}) that 
$$ 
y_{\nu}(\t) = \frac{\nu}{2}\left( E+ \mathcal{O}\left( \sin^2\frac{\t}{2}\right) \right). 
$$
In particular, 
$$
 \lim_{\t  \to  0}y_{\nu}(\t)  = \frac{\ell(\ell+1)}{2(1-\nu)} >0,
 $$ 
since we are considering $\ell > 0$ and $-1<\nu <1.$ We will now show by contradiction that there is no point on the interval $[0,\ell_2)$ where $y_{\nu}$ changes sign. To do so, we first derive a Riccati equation for $y_{\nu}$.  
It follows from equation (\ref{eq:defdey}) that 
\begin{equation}\label{eq:derivdey}
\dot y_{\nu}  = \frac{\cos\t}{\sin^2\t}\frac{P_\ell^{\nu+1}}{P_\ell^\nu} + \frac{\dot P_\ell^{\nu+1}}{P_\ell^\nu} - \frac{P_\ell^{\nu+1}\dot P_\ell^\nu}{\left(P_\ell^\nu\right)^2} +\frac{\nu(1+\cos\t)^2}{\sin^3\t}.
\end{equation}
Using equations (\ref{eq:subida}) and (\ref{eq:bajada}) in equation (\ref{eq:derivdey}) we obtain 
\begin{equation}\label{eq:ri1}
\dot y_{\nu} = \frac{1}{\sin\t}\left(\frac{P_\ell^{\nu+1}}{P_\ell^\nu}\right)^2 +\frac{2(\nu+1)\cos\t}{\sin^2\t}\frac{P_\ell^{\nu+1}}{P_\ell^\nu} + \frac{(\ell+\nu+1)(\ell-\nu)}{\sin\t}  +\frac{\nu(1+\cos\t)^2}{\sin^3\t}.
\end{equation}
Finally, using equation (\ref{eq:defdey}) to solve for ${P_\ell^{\nu+1}}/{P_\ell^\nu}$ we obtain the following Riccati equation for $y_{\nu}$,
\begin{equation}\label{eq:riccati}
\dot y_{\nu} = \sin\t \, y_{\nu}^2 +\frac{2y_{\nu}}{\sin\t}(\nu-\cos\t) + \frac{\ell(\ell+1)}{\sin\t}.
\end{equation}
Since $y_{\nu(0)} >0$, and $y_{\nu}(\theta)$ is continuous in $\theta$, 
if $y_{\nu}(\t)$ were to cross $y_{\nu}=0,$ there would exist a point, $\t^*,$ such that $y_{\nu}(\t^*)=0$ and $\dot y_{\nu}(\t^*)<0.$ But from equation (\ref{eq:riccati}) we would 
then have 
$$ 
\dot y_{\nu}(\t^*) = \frac{\ell(\ell+1)}{\sin\t^*} >0,
$$
arriving at a contradiction. We conclude that $y_{\nu}$ is positive on $[0,\ell_2)$. 
\end{proof}

\begin{lemma}[Case 2] \label{Case2}
Assume $\lambda \le  -d(d-2)/4$ and $0 \le \theta_1 \le \pi/2$. Then, there are no positive solutions of
\begin{equation}
-\frac{(\sin^{d-1}\t \,u')'}{\sin^{d-1}\t} = u^p +\lambda u,
\label{eq:4.50}
\end{equation} 
\noindent with boundary conditions $u'(0)=0$, and $u(\t_1) = 0$.
\end{lemma}

\begin{proof}
Proceeding as in the proof of (\ref{Case1}), we conclude that if $u$ is a smooth  positive solution of (\ref{eq:4.1}) satisfying the boundary conditions $u'(0)=0$ 
and $u(\theta_1)=0$, and $f$ is a  smooth function satisfying $f(0)=0$  we have
\begin{equation}\label{eq:pohonueva}
\frac{1}{2}\sin^{2d-4}\t_1 \, u'(\t_1)^2 \, f(\t_1) = \displaystyle\int_0^{\t_1} B\,u^{p+1}\, d\t + \displaystyle\int_0^{\t_1} A\, u^2\, d\t, 
\end{equation}
where 
\begin{equation}\label{eq:A3nueva}\begin{split}
A = \sin^{2d-4}\t &\, \left[ \frac{f'''}{4} +\frac{3}{4}(d-3)\cot\t f'' + f' \left( \frac{(d-3)(2d-11)}{4}\cot^2\t +\frac{7-d}{4} +\lambda \right) \right.\\
 &\,\,\,\,  \left. + f \left( (d-3)(4-d) \cot^3\t + 2(d-3)\cot \t +\lambda (d-3)\cot\t\right) \right]. \\
\end{split}
\end{equation}
and 
\begin{equation}\label{eq:Bnueva}
\begin{split}
B \equiv \frac{d-1}{d} \sin^{2d-4} \theta \left[f'(\theta) + (d-4) \cot \theta \, f(\theta) \right].
\end{split}
\end{equation}
Equations (\ref{eq:pohonueva}),  (\ref{eq:A3nueva}), and  (\ref{eq:Bnueva}), follow from (\ref{eq:poho}),  (\ref{eq:A}), and  (\ref{eq:B}) respectively setting $g(\theta)= 
f(\theta) /\sin^2 (\theta)$ (see in fact (\ref{eq:A2})).

\bigskip
\noindent
This time we choose $f(\theta) = \sin^{4-d} (\theta)$, so that $B \equiv 0$. After a long but straightforward computation we find that for this choice of $f(\theta)$ one has, 
\begin{equation}
A(f) = \frac{1}{4} \cot \theta \left[ 4 \lambda + d (d-2) \right] \le 0, 
\label{eq:borr6}
\end{equation}
provided   $4 \lambda + n (n-2) \le 0$ and $ \theta \le \pi/2$ (since $\cot \theta \ge 0$ if $\theta \le \pi/2$).  Using (\ref{eq:borr6}) in (\ref{eq:pohonueva}) we get a contradiction, which concludes the proof of this lemma. 
\end{proof}

\section*{Acknowledgements}

We would like to thank M.~Cristina Depassier for helping us with Mathematica to compute  the numerical curves contained in the figures we give in the Introduction.
We also thank the anonymous referee for the suggestions that helped us improve the presentation of the results of this manuscript. 
The work of RB has been supported by Fondecyt (Chile) Projects \# 112--0836, \# 116--0856, and \# 114--1155, and by the Nucleo Milenio en  ``F\ii sica Matem\'atica'', RC--12--0002 (ICM, Chile).

\bigskip
\section*{Appendix A: Proof of Theorems 1.1 and 1.4}

\begin{proof} [Proof of Theorem 1.1]

\noindent
In 1999 Jannelli \cite{Ja99} considered the non linear boundary value problem, 
\begin{equation}
-u'' - \frac{d-1}{r} \, u' = |u|^{4/(d-2)} \, u + \lambda \, u, 
\label{jannelli1}
\end{equation}
in the interval $(0,1)$ with the boundary conditions $u'(0)=0$ and $u(1)=0$, for $2<d<4$, where  $u>0$ is such that 
$\int_0^1 u^2(r) \, r^{d-1} \, dr < \infty$ and $\int_0^1 {u'}^2(r) \, r^{d-1} \, dr < \infty$. Among other results in \cite{Ja99} Jannelli proved  that
(\ref{jannelli1}) has a unique positive solution if 
\begin{equation}
j_{-(d-2)/2,1}^2 < \lambda < j_{(d-2)/2,1}^2, 
\label{jannelli2}
\end{equation}
and no positive solutions if $\lambda$ lies outside this interval (here $j_{k,\ell}$ denotes the $\ell$-th positive zero of the Bessel function $J_k(t)$). 
From the result of Jannelli it is simple to prove our Theorem 1.1. For radial solutions of (\ref{BNwithdrift1}), one is lead to the equation (\ref{jannelli1}), considered by Jannelli, provided 
we set $d=n - \delta$. Because of Hardy's inequality we have to constrain $\delta$ to  $\delta < (n-2)/2$. 
If $n=3$, $d=3 -\delta$ and $\delta <1/2$. Given that $d=3-\delta$, if $\delta \in (-1,1/2)$, we are in the situation 
given by (\ref{jannelli2}) and the proof of i) of Theorem 1.1 follows. The proofs of ii), and iii)
follow in a similar way.
\end{proof}

\bigskip
\bigskip
\noindent
The proof of  Theorem 1.4 follows from the results of our Theorem 1.2 following the same line of thought we used above to prove Theorem 1.1 from the results of Jannelli. 

\bigskip
\begin{proof} [Proof of Theorem 1.4]

\noindent
For radial solutions (i.e., solutions that only depend on the azimuthal angle $\theta$) of (\ref{BBDriftSn}) on a geodesic ball $D$ centered at the $N\!P$ of $\mathbb{S}^n$, $n\ge 3$, 
one is led to the boundary value problem (BVP)  given by (\ref{BBSnRadial}) on the interval $(0,\theta_1)$, with boundary conditions $u'(0)=0$ and $u(\theta_1)=0$. Here $\theta_1$ is
the (geodesic) radius of the geodesic ball $D$. In Theorem 1.2, we determined the exact ranges of $\lambda$ (depending on the values of the parameter $d >2$ and the 
values of $\theta_1$) for which there is a (unique) positive solution and the range of values of $\lambda$ for which there are no positive solutions of this BVP. 
In particular, if $2<d<4$, for $\lambda>-d(d-2)/4$ and all $0 < \theta_1 \le \pi$,  there are unique positive solutions in the range (\ref{rangeoflambda}) and no solutions 
outside that range. Again, because of Hardy's inequality we have the constraint $\delta < (n-2)/2$. 
If $n=3$, $d=3 -\delta$ and $\delta <1/2$. Given that $d=3-\delta$, if $\delta \in (-1,1/2)$, we are in the situation 
given by (\ref{rangeoflambda}) and the proof of i) of Theorem 1.4 follows. The proofs of ii), and iii) 
follow in a similar way.
\end{proof}

\bigskip

\section*{Appendix B: Uniqueness of positive solutions}

\noindent
Here we prove the uniqueness of positive solutions of (\ref{BBSnRadial}) where $2<d<4$ provided $\lambda \ge -d(d-2)/4$.  
The proof follows from the classical result of Kwong and Li \cite{KwLi92}. Consider the equation 
\begin{equation}
-u''-(d-1) \cot \theta \, u' = u^p + \lambda \, u
\label{eq:Ap1}
\end{equation}
in the interval $(0,\theta_1)$, where $u \ge 0$ satisfies the  boundary conditions $u'(0)=0$, $u(\theta_1)=0$, with $\theta_1 \le \pi$. 
Here $2<d<4$ and $p=(d+2)/(d-2)$ is  the critical Sobolev exponent. Let $\alpha=(2-d)/2$ as before, and make the change of variable $u \to v$ given by
\begin{equation}
u(\theta) = v(\theta) \sin^{\alpha}(\theta). 
\label{eq:Ap2}
\end{equation}
Then, equation (\ref{eq:Ap1}) becomes, 
\begin{equation}
 \sin^2 \theta \, v''(\theta)  +  \sin \theta \, \cos \theta \, v'(\theta)  +  {v(\theta)}^p + G_{\lambda}(\theta)  \, v (\theta)=0,
\label{eq:Ap3}
\end{equation}
where the function $G_{\lambda}(\theta)$ is given by 
\begin{equation}
 G_{\lambda}(\theta) = -\alpha^2 + \left[\lambda + \frac{d(d-2)}{4} \right] \sin^2 \theta. 
\label{eq:Ap4}
\end{equation}
Notice that the function $G_{\lambda}(\theta)$ is a {\it $\Lambda$ function}  in the sense of Kwong and Li \cite{KwLi92}, i.e., it first increases, it has at most one maximum and then decreases when $\theta$ runs from $0$ to $\theta_1$ and $\theta_1 \le \pi$ and $4 \lambda + d(d-2) \ge 0$. 

\bigskip
\noindent
Next define the {\it energy function} 
\begin{equation}
E [v] \equiv \sin^2 \theta \, {v'(\theta)}^2 + \frac{2}{p+1} v(\theta)^{p+1} + G_{\lambda}(\theta) \, v(\theta)^2.
\label{eq:Ap5}
\end{equation}
Then, if $v(\theta)$ solves (\ref{eq:Ap3}), we have that
\begin{equation}
\frac{dE}{d\theta} =G_{\lambda}' \, v^2. 
\label{eq:Ap6}
\end{equation}
Since for $\lambda \ge -d(d-2)/4$ and $\theta_1 \le \pi$, $G_{\lambda}(\theta)$ is a {\it $\Lambda$--function}, it follows from \cite{KwLi92} that $v$, hence $u$,  is unique. 
The condition $\lambda \ge -d(d-2)/4$, that ensures that $G_{\lambda}(\theta)$ is a {\it $\Lambda$--function}, is needed to prove uniqueness. In fact, if one drops this condition 
in the case $d=3$ uniqueness fails as shown in \cite{BaWe07,BaWe08,BrPe04,BrPe06}. Results similar to those obtained  in \cite{BaWe07,BaWe08,BrPe04,BrPe06} are expected for $2<d<4$ when $\lambda < -d(d-2)/4$.




\end{document}